\numberwithin{equation}{section}
\theoremstyle{definition}
\newtheorem{Definition}{Definition}[section]
\newtheorem{Example}[Definition]{Example}
\newtheorem{Remark}[Definition]{Remark}
\theoremstyle{plain}
\newtheorem{Theorem}[Definition]{Theorem}
\newtheorem*{MainTheorem}{Main Theorem}
\newtheorem{Proposition}[Definition]{Proposition}
\newtheorem{Corollary}[Definition]{Corollary}
\newtheorem{Lemma}[Definition]{Lemma}
\newtheorem{Problem}[Definition]{Problem}
\newcommand{\al}{\alpha}
\newcommand{\ga}{\gamma}
\newcommand{\si}{\sigma}
\newcommand{\N}{\mathbb{N}}
\newcommand{\Z}{\mathbb{Z}}
\newcommand{\Q}{\mathbb{Q}}
\newcommand{\C}{\mathbb{C}}
\newcommand{\K}{\Bbbk}
\newcommand{\Be}{\boldsymbol{e}}
\newcommand{\CA}{\mathcal{A}}
\newcommand{\CC}{\mathcal{C}}
\newcommand{\CI}{\mathcal{I}}
\newcommand{\CL}{\mathcal{L}}
\newcommand{\CO}{\mathcal{O}}
\newcommand{\op}{\operatorname}
\DeclareMathOperator{\Aut}{Aut}
\DeclareMathOperator{\Irr}{Irr}
\DeclareMathOperator{\GL}{GL}
\newcommand{\iv}[2]{\llbracket #1,#2 \rrbracket}
\renewcommand{\tilde}{\widetilde}
\renewcommand{\mod}[1]{\;\text{(mod $#1$)}}
\title{Classification of twisted generalized Weyl algebras over polynomial rings}
\author{Jonas T. Hartwig}
\author{Daniele Rosso}
\date{\today}
\address{Department of Mathematics, Iowa State University, Ames, IA-50011, USA}
\email{jth@iastate.edu}
\address{Department of Mathematics and Actuarial Science, Indiana University Northwest, Gary, IN-46408, USA}
\email{drosso@iu.edu}
\begin{document}
\maketitle
\begin{abstract}
Let $R$ be a polynomial ring in $m$ variables over a field of characteristic zero. We classify all rank $n$ twisted generalized Weyl algebras over $R$, up to $\mathbb{Z}^n$-graded isomorphisms, in terms of higher spin 6-vertex configurations. 
Examples of such algebras include infinite-dimensional primitive quotients of $U(\mathfrak{g})$ where $\mathfrak{g}=\mathfrak{gl}_n$, $\mathfrak{sl}_n$, or $\mathfrak{sp}_{2n}$, algebras related to $U(\widehat{\mathfrak{sl}}_2)$ and a finite W-algebra associated to $\mathfrak{sl}_4$.
To accomplish this classification we first show that the problem is equivalent to classifying solutions to the binary and ternary consistency equations. Secondly, we show that the latter problem can be reduced to the case $n=2$, which can be solved using methods from previous work by the authors \cite{HarRos2016}, \cite{Har2018}.
As a consequence we obtain the surprising fact that (in the setting of the present paper) the ternary consistency relation follows from the binary consistency relation.
\end{abstract}

% Main text

\section{Introduction and motivation}
\emph{Generalized Weyl algebras} (GWAs) form a class of noncommutative rings introduced by Bavula \cite{Bav1992} and, under the name \emph{hyperbolic rings}, by Rosenberg \cite{Ros1995}. They have been subject to intense research in the last few decades \cite{Bav1992,BavJor2000,BenOnd2009,Brz2016} due to their many pleasant properties. Many rings of interest in ring theory (Weyl algebras, ambiskew polynomial rings \cite{Jor1993}, generalized down-up algebras \cite{CasShe2004}), representation theory ($U(\mathfrak{g})$ and $U_q(\mathfrak{g})$ where 
$\mathfrak{g}=\mathfrak{sl}_2$, $\mathfrak{gl}_2$, or the positive part of $\mathfrak{sl}_3$) and noncommutative geometry (quantum spheres, quantum lens spaces \cite{Brz2016} and references therein) are examples of GWAs. In addition the class is closed under taking tensor products, and taking the ring of invariants with respect to finite order graded automorphisms.

When algebraic structures are given by presentations, the question arises whether two different presentations give rise to isomorphic algebras. This is known as the isomorphism problem. In general, answering this question can be very difficult. For example, to determine whether two presented groups are isomorphic is known to be an NP hard problem.
The isomorphism problem for GWAs has been studied in \cite{BavJor2000,CarLop2009,Gad2014,SuaViv2015,Tan2018,KitLau2019}.

Despite the variety of interesting examples of GWAs, there are some algebras that ``should'' be GWAs but are not, for example multiparameter quantized Weyl algebras defined by Maltsiniotis \cite{Mal1990} and studied in \cite{Jor2011}. To remedy this deficiency, as well as including examples related to higher rank Lie algebras, Mazorchuk and Turowska introduced the class of \emph{twisted generalized Weyl algebras} (TGWAs). Their structure and representation theory have been extensively studied. See for example \cite{MazTur1999,MazTur2002,MazPonTur2003,Ser2001,Har2006,Har2009,HarOin2013,FutHar2012a,FutHar2012b,HarRos2016,Har2018,Har2017} and references therein.
Besides quantized Weyl algebras, examples include quotients of enveloping algebras of simple Lie algebras by annihilators of completely pointed (i.e. multiplicity-free) simple weight modules \cite{HarSer2016}.

An additional difficulty arise for TGWAs, as they are not in general free as left modules over their degree zero subring. In particular, for some choices of input data the relations will be contradictory, i.e. the algebra will be trivial, consisting of a single element $0=1$, see \cite{FutHar2012a}.

To show that the algebra is nontrivial, the strategy was to show that the degree zero subring is isomorphic to the base ring $R$. This question can be seen as a weak isomorphism problem, because we are asking for the structure of the degree zero subring.
The full isomorphism problem for TGWAs has only been studied for special classes such as multiparameter quantized Weyl algebras \cite{GooHar2015,Gad2017,Tan2016}.

In this paper we solve the graded isomorphism problem for a natural class of TGWAs.
First we show that the graded isomorphism problem is equivalent to classifying solutions to the binary and ternary consistency equations, up to a natural equivalence that we define.
The latter amounts to the following polynomial problem, which is a higher rank and multivariate generalization of the problem addressed in \cite{HarRos2016}.

\begin{Problem}
Let $\K$ be a field of characteristic zero, $m$ a positive integer and let 
 $R=\K[u_1,u_2,\ldots,u_m]$ 
 be the polynomial algebra over $\K$ in $m$ commuting independent indeterminates $u_j$.
Find all pairs $(\alpha,p)$, where $\alpha=(\al_{ij})\in M_{m\times n}(\K)$ is a matrix and $p=(p_1,p_2,\ldots,p_n)\in R^n$ is an $n$-tuple of polynomials satisfying the following equations:
\begin{subequations}\label{eq:consistency-eqs}
\begin{align}
\label{eq:binary}
p_i(u-\tfrac{\al_j}{2})p_j(u-\tfrac{\al_i}{2})&=p_i(u+\tfrac{\al_j}{2})p_j(u+\tfrac{\al_i}{2}) \quad \forall i\neq j\\
\label{eq:ternary}
p_k(u-\tfrac{\al_i}{2}-\tfrac{\al_j}{2})p_k(u+\tfrac{\al_i}{2}+\tfrac{\al_j}{2})&=p_k(u-\tfrac{\al_i}{2}+\tfrac{\al_j}{2})p_k(u+\tfrac{\al_i}{2}-\tfrac{\al_j}{2})\quad \forall i\neq j\neq k\neq i
\end{align}
\end{subequations}
where $u=(u_1,u_2,\ldots,u_m)$ and $\al_j=(\al_{1j},\al_{2j},\ldots,\al_{mj})$ for $j=1,2,\ldots,n$.
\end{Problem}

In \cite{HarRos2016}, we solved this problem in the case of $(m,n)=(1,2)$ and $\K=\C$.

Secondly, generalizing results from \cite{HarRos2016}, we prove that any solution can be factored into \emph{orbital} solutions, i.e. solutions all of whose irreducible factors belong to a given $\Z^n$-orbit in $R$. (In \cite{HarRos2016}, where $(m,n)=(1,2)$, orbital solutions were called \emph{integral solutions}.)
Next we prove that any orbital solution is essentially rank two (see Corollary \ref{cor:nontriv}). 
Finally, we extend the methods of \cite{HarRos2016,Har2018} to give a combinatorial classification of all essentially rank two solutions in terms of vertex configurations on a square lattice.

As a surprising consequence, we deduce that, when $R$ is a polynomial ring and the automorphisms are given by additive shifts, the binary consistency relation actually implies the ternary consistency relation. In general, as shown in \cite{FutHar2012a}, these relations are independent.

\subsection*{Future directions}
It is natural to ask what will happen in finite characteristic. Some solutions to the consistency equations in finite characteristic were given in \cite{MazTur1999}. Another direction would be to study this question when the base algebra is not a polynomial algebra, for example a Laurent polynomial ring. This case is expected to be related to quantum analogs of the algebras from this paper. Lastly we remark that the problem of determining consistency equations necessary and sufficient for a given TGWA to be consistent is still open in the case when certain elements $t_i$ (see Definition \ref{def:TGW-stuff}) are allowed to be zero divisors. Such TGWAs appear naturally as matrix algebras that are finite-dimensional primitive quotients 
of enveloping algebras \cite{HarSer2016}, or as algebras of differential operators with $\mathcal{C}^{\infty}$-function coefficients.

\section{On the graded isomorphism problem for twisted generalized Weyl algebras}

\subsection{Definition of twisted generalized Weyl algebra}

We use slightly different notation from other papers in  the literature.
Specifically, our use of $\si_i^{1/2}$ comes from the fact that we want to write the equations \eqref{eq:consistency-eqs-sigma} in a symmetric way.
This system of notation is equivalent to the original one from \cite{MazTur1999} by simple substitutions, see Section \ref{sec:symmetrized} for details.

\begin{Definition} \label{def:TGW-stuff}
Let $n$ be a positive integer.
\begin{enumerate}[{\rm (i)}]
\item A \emph{twisted generalized Weyl datum (TGWD) of rank $n$} is a triple $(R,\si,t)$ where $R$ is a ring, $\si=(\si_1^{1/2},\si_2^{1/2},\ldots,\si_n^{1/2})$ is an $n$-tuple of commuting ring automorphisms of $R$, and $t=(t_1,t_2,\ldots,t_n)$ is an $n$-tuple of nonzero elements from the center of $R$.
\item $(R,\si,t)$ is \emph{regular} if $t_i$ is regular (i.e. not a zero-divisor) in $R$ for all $i$.
\item $(R,\si,t)$ is \emph{consistent} if the following equations hold:
\begin{subequations}\label{eq:consistency-eqs-sigma}
\begin{align}
\label{eq:binary-consistency-eq-sigma}
\si_j^{1/2}(t_i) \cdot \si_i^{1/2}(t_j) &= 
 \si_j^{-1/2}(t_i) \cdot \si_i^{-1/2}(t_j)\;\; (i\neq j)\\
\si_i^{1/2}\si_j^{1/2}(t_k) \cdot 
\si_i^{-1/2}\si_j^{-1/2}(t_k) &=
\si_i^{1/2}\si_j^{-1/2}(t_k) \cdot
\si_i^{-1/2}\si_j^{1/2}(t_k)\;\; (i\neq j\neq k\neq i)
\end{align}
\end{subequations}
\item Let $(R,\si,t)$ be a TGWD of rank $n$. The associated \emph{twisted generalized Weyl construction (TGWC)}, $\CC(R,\si,t)$, is the $\Z^n$-graded $R$-ring generated by $2n$ indeterminates $X_1^+,X_1^-,\ldots,X_n^+,X_n^-$ subject to:
\begin{subequations}\label{eq:tgwa-rels}
\begin{align}
\label{eq:tgwa-rel-1}
X_i^\pm X_i^\mp &=\si_i^{\pm 1/2}(t_i)1,  \\
\label{eq:tgwa-rel-2}
X_i^\pm X_j^\mp &=X_j^\mp X_i^\pm \;\; (i\neq j), \\
\label{eq:tgwa-rel-3}
X_i^\pm \si_i^{\mp 1/2}(r) &=\si_i^{\pm 1/2}(r)X_i^\pm\;\; (\forall r\in R), \\
\deg(X_i^\pm)&=\pm\Be_i, \\
\deg (r1) &=\boldsymbol{0} \;\;(\forall r\in R),
\end{align}
\end{subequations}
where $\oplus_{i=1}^n \Z\Be_i=\Z^n$.
\item Let $(R,\si,t)$ be a TGWD of rank $n$. The associated \emph{twisted generalized Weyl algebra (TGWA)}, $\CA(R,\si,t)$, is the quotient ring $\CC(R,\si,t)/\CI$ where $\CI$ is the sum of all graded ideals $J=\oplus_{\boldsymbol{d}\in\Z^n} J_{\boldsymbol{d}}$ such that $J_{\boldsymbol{0}}=\{0\}$.
\end{enumerate}
\end{Definition}

A basic problem for rings given by generators and relations is whether the ring is non-trivial. The following result, which was the main theorem from \cite{FutHar2012a}, gives a sufficient condition in the case of TGWCs and TGWAs.
This motivates the terminology in Definition \ref{def:TGW-stuff}(iii).

\begin{Theorem}[{\cite{FutHar2012a}}] \label{thm:consistency}
Let $(R,\si,t)$ be a regular TGWD of rank $n$, let $C=\CC(R,\si,t)$ and $A=\CA(R,\si,t)$ be the associated TGWC and TGWA respectively, with $\Z^n$-gradation $C=\bigoplus_{\boldsymbol{d}\in\Z^n}C_{\boldsymbol{d}}, A=\bigoplus_{\boldsymbol{d}\in\Z^n}A_{\boldsymbol{d}}$.  Then the following are equivalent:
\begin{enumerate}[{\rm (i)}]
\item $(R,\si,t)$ is consistent,
\item the canonical map $R\to C_{\boldsymbol{0}}$ is an isomorphism,
\item the canonical map $R\to A_{\boldsymbol{0}}$ is an isomorphism.
\end{enumerate}
In particular, if $(R,\si,t)$ is regular and consistent, then $C$ and $A$ are both non-trivial rings.
\end{Theorem}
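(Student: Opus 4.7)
The plan is to deduce the three-way equivalence by observing that (ii) $\Leftrightarrow$ (iii) is automatic and then handling (ii) $\Rightarrow$ (i) and (i) $\Rightarrow$ (ii) separately. For the first equivalence: by construction, $\CI$ is the sum of graded ideals whose degree-$\boldsymbol{0}$ components vanish, so $\CI_{\boldsymbol{0}}=\{0\}$ and the projection $C_{\boldsymbol{0}}\to A_{\boldsymbol{0}}$ is an isomorphism. Composing with $R\to C_{\boldsymbol{0}}$ shows these two maps are simultaneously isomorphisms.

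To prove (ii) $\Rightarrow$ (i), I would exploit $R\cong C_{\boldsymbol{0}}$ to turn distinct reductions of a single degree-$\boldsymbol{0}$ monomial into equations in $R$. For the binary relation, consider $X_j^- X_i^+ X_i^- X_j^+$. Collapsing the inner pair $X_i^+X_i^-$ via \eqref{eq:tgwa-rel-1} and pushing the resulting central factor past the outer $X_j^\pm$ via \eqref{eq:tgwa-rel-3} yields $\si_j^{-1}\si_i^{1/2}(t_i)\cdot\si_j^{-1/2}(t_j)$; alternatively, applying \eqref{eq:tgwa-rel-2} twice to commute the mixed-sign pairs $X_j^-X_i^+$ and $X_i^-X_j^+$, then collapsing $X_j^-X_j^+$ and shifting past $X_i^\pm$, yields $\si_i\si_j^{-1/2}(t_j)\cdot\si_i^{1/2}(t_i)$. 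Under (ii) these are equal in $R$, and applying the automorphism $\si_i^{-1/2}\si_j^{1/2}$ to both sides converts the identity into \eqref{eq:binary-consistency-eq-sigma}. The ternary relation is extracted analogously, by computing a degree-$\boldsymbol{0}$ monomial containing a single pair $X_k^+X_k^-$ flanked by a balanced word in $X_i^\pm$ and $X_j^\pm$ in two different orders of reduction; after normalization by a suitable half-shift, the ternary consistency equation emerges.

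For the harder direction (i) $\Rightarrow$ (ii), the plan is to realize $C$ faithfully on a $\Z^n$-graded $R$-module $M=\bigoplus_{g\in\Z^n} R\cdot v_g$. Let $R$ act on $v_g$ by a $g$-dependent twist of its natural action, set $X_i^+\cdot v_g = v_{g+\Be_i}$, and define $X_i^-\cdot v_g = s_i(g)\cdot v_{g-\Be_i}$ for explicit scalars $s_i(g)\in R$ dictated by \eqref{eq:tgwa-rel-1}. Surjectivity of $R\to C_{\boldsymbol{0}}$ is routine: any degree-$\boldsymbol{0}$ monomial in the $X_i^\pm$ can be brought into a form with all matched pairs adjacent by repeatedly applying \eqref{eq:tgwa-rel-2} to commute mixed-sign neighbors, and then collapsed to $R$ using \eqref{eq:tgwa-rel-1} and \eqref{eq:tgwa-rel-3}. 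Injectivity will follow from the faithful $R$-action on $v_{\boldsymbol{0}}$.

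The main obstacle is verifying well-definedness of $M$ as a $C$-module. One must check that the defining relations of $C$ hold on every basis vector; in particular, the equality $X_i^+X_j^-\cdot v_g = X_j^-X_i^+\cdot v_g$ unpacks, after canceling the common shift, into a polynomial identity in $R$ which is exactly \eqref{eq:binary-consistency-eq-sigma}, and triple compositions involving $X_k^\pm$ similarly force the ternary relation. Regularity of the $t_i$ is crucial here: it ensures the scalars $s_i(g)$ are nonzero, that various cancellation arguments in $R$ are valid, and that the module is faithful enough to detect all of $R$.
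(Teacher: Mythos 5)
Your easy steps are fine: since $\CI_{\boldsymbol{0}}=\{0\}$, the projection $C_{\boldsymbol{0}}\to A_{\boldsymbol{0}}$ is an isomorphism, so (ii)$\Leftrightarrow$(iii); and your two reductions of $X_j^-X_i^+X_i^-X_j^+$ do yield \eqref{eq:binary-consistency-eq-sigma} after applying $\si_i^{-1/2}\si_j^{1/2}$. Note, however, that the paper does not prove this theorem at all: it quotes it from \cite{FutHar2012a} and only remarks that surjectivity is easy while injectivity is proved there by the diamond lemma. Measured against what a complete proof requires, your proposal has two genuine gaps, both located exactly at the hard points.

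First, the ternary relation cannot be extracted from (ii) by the word shape you describe. Every reduction of a degree-$\boldsymbol{0}$ word to an element of $R$ produces exactly one factor of the form $\si^{\boldsymbol{a}}(t_k)$ for each matched pair $X_k^+,X_k^-$ in the word. A word with a single pair $X_k^+X_k^-$ flanked by a balanced word in $X_i^\pm,X_j^\pm$ therefore yields, in every reduction, exactly one shifted copy of $t_k$ (plus shifted copies of $t_i$ and $t_j$), whereas each side of the ternary relation is a product of two shifted copies of $t_k$ and nothing else. Concretely, reducing $X_i^-X_j^-X_k^+X_k^-X_j^+X_i^+$ in two ways and cancelling the regular factor $\si_i^{-1/2}(t_i)$ just reproduces \eqref{eq:binary-consistency-eq-sigma} for the pair $(j,k)$. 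To force the ternary relation one needs words containing at least two $k$-pairs, followed by cancellation of regular factors and use of the already-established binary relations; this is the nontrivial part of the necessity argument and is missing.

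Second, the direction (i)$\Rightarrow$(ii) is only asserted. The module $M=\bigoplus_g Rv_g$ is never constructed: the twisted $R$-action and the scalars $s_i(g)$ are unspecified, and checking \eqref{eq:tgwa-rel-1} and \eqref{eq:tgwa-rel-2} on $M$ amounts to a system of functional equations such as $\si_i(s_j(g))$ matching $s_j(g+\Be_i)$ and products of $s_i$'s equalling shifts of $t_i$, whose solvability over $R$ (where the $t_i$ are neither invertible nor guaranteed to factor compatibly) is precisely the content of the theorem; for instance, with the untwisted action and $X_i^+\cdot v_g=v_{g+\Be_i}$, relation \eqref{eq:tgwa-rel-2} forces $\si_i(t_j)=t_j$, which is false in general, so the claimed construction needs real work. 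Most tellingly, your sketch never exhibits where the ternary relation is used, yet it must be used: as the paper emphasizes (citing \cite{FutHar2012a}), the binary relation alone does not imply (ii), so any argument whose verifications involve only pairwise identities would prove too much. The published proof avoids the module route altogether and establishes injectivity by Bergman's diamond lemma, resolving the ambiguities of a reduction system, which is exactly where both consistency relations enter.
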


That the canonical maps are both surjective is easy to see. The difficult part is to prove they are injective. The proof involves the diamond lemma and the relations \eqref{eq:consistency-eqs-sigma}. Relation \eqref{eq:binary-consistency-eq-sigma} appeared already in \cite{MazTur1999}.

We will need the following lemma, which is a special case of \cite[Lem.~3.2]{Har2006}

\begin{Lemma} \label{lem:cyclic-components}
 Let $(R,\si,t)$ be a TGWD of rank $n$. Let $C=\CC(R,\si,t)$ and $A=\CA(R,\si,t)$ be the corresponding TGWC and TGWA. Then:
\begin{equation}
C_{\pm \Be_i}=C_{\boldsymbol{0}} X_i^\pm=X_i^\pm C_{\boldsymbol{0}}
\end{equation}
\begin{equation}
A_{\pm \Be_i}=A_{\boldsymbol{0}} X_i^\pm=X_i^\pm A_{\boldsymbol{0}}
\end{equation}
\end{Lemma}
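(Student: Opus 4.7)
My plan is to prove the statement for $C$ first, then deduce the statement for $A$ by passing to the quotient. Using \eqref{eq:tgwa-rel-3}, every element of $C$ may be rewritten as an $R$-linear combination of pure monomials $w = X_{j_1}^{\epsilon_1}X_{j_2}^{\epsilon_2}\cdots X_{j_k}^{\epsilon_k}$ in the generators $X_j^{\pm}$. Since the inclusions $C_{\boldsymbol{0}}X_i^{\pm},\,X_i^{\pm}C_{\boldsymbol{0}}\subseteq C_{\pm\Be_i}$ are immediate from the $\Z^n$-grading, it suffices to show that every such $w$ with $\deg w = \pm\Be_i$ belongs to $C_{\boldsymbol{0}}X_i^{\pm}$ (and symmetrically to $X_i^{\pm}C_{\boldsymbol{0}}$).

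I argue by induction on $k$, which is forced to be odd by the parity of the coordinate sum of $\deg w = \pm\Be_i$. For $k=1$, the degree condition forces $j_1=i$ and $\epsilon_1=\pm$, so $w=X_i^{\pm}$. For $k\geq 3$, the $\ell^1$-norm of $\deg w$ equals $1$, so the signs $\epsilon_1,\ldots,\epsilon_k$ cannot all agree, and moreover at least one index $j$ must appear with both signs (otherwise at every appearing index the signs would be constant, forcing $\deg w$ to have $\ell^1$-norm $k$). The plan is to use the commutation relations \eqref{eq:tgwa-rel-2} to bring an opposite-sign pair of index $j$ into adjacency, then apply the cancellation relation \eqref{eq:tgwa-rel-1} to replace $X_j^{\pm}X_j^{\mp}$ by $\sigma_j^{\pm 1/2}(t_j)\in R$, producing a word of length $k-2$. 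After moving the resulting $R$-coefficient out via \eqref{eq:tgwa-rel-3}, the inductive hypothesis applies.

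The symmetric equality $C_{\pm\Be_i}=X_i^{\pm}C_{\boldsymbol{0}}$ follows either by the analogous argument collecting $R$-coefficients on the right, or directly from the identity $cX_i^{\pm}=X_i^{\pm}\sigma_i^{\mp 1}(c)$ which is a consequence of \eqref{eq:tgwa-rel-3}. For the TGWA, the quotient map $\pi\colon C\twoheadrightarrow A$ is $\Z^n$-graded, sends $C_{\boldsymbol{0}}$ onto $A_{\boldsymbol{0}}$, and maps $X_i^{\pm}$ to $X_i^{\pm}$; applying $\pi$ to the equality for $C$ yields $A_{\pm\Be_i}=A_{\boldsymbol{0}}X_i^{\pm}=X_i^{\pm}A_{\boldsymbol{0}}$.

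The main obstacle is showing that the commutations can always be arranged so as to bring some opposite-sign pair of a common index into adjacency. The naive strategy, choosing a minimal-distance opposite-sign pair of index $j$ and moving either $X_j^+$ rightward or $X_j^-$ leftward, is blocked when the intermediate generators carry the ``wrong'' sign for \eqref{eq:tgwa-rel-2} to apply. A more careful iterative procedure is required, which may proceed by reducing a suitable invariant such as an inversion count and invoking the fact that the set of words of a given length is finite, ruling out infinite descent. The detailed combinatorial argument is carried out in \cite[Lem.~3.2]{Har2006}.
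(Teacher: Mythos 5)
You should first note that the paper does not actually prove this lemma: it is stated as ``a special case of \cite[Lem.~3.2]{Har2006}'' with no internal argument, so there is no in-paper proof to match. Your direct attempt has the right skeleton, and most of its steps are correct: the reduction to pure monomials with coefficients in (the image of) $R$ via \eqref{eq:tgwa-rel-3}, the inclusion $C_{\boldsymbol{0}}X_i^\pm,\,X_i^\pm C_{\boldsymbol{0}}\subseteq C_{\pm\Be_i}$, the base case, the observation that for $k\ge 2$ some index must occur with both signs, the contraction via \eqref{eq:tgwa-rel-1} followed by re-extraction of the coefficient, and the passage to $A$ through the graded quotient map. But the step that carries all the content --- that using only the swaps \eqref{eq:tgwa-rel-2} one can always make some same-index, opposite-sign pair adjacent --- is exactly the step you do not prove. ``Reducing a suitable invariant such as an inversion count'' is not an argument until the invariant is defined and shown to strictly decrease (finiteness of the set of words of a given length rules out infinite descent only once such a quantity exists; it does not by itself rule out cycling), and deferring ``the detailed combinatorial argument'' to \cite[Lem.~3.2]{Har2006} leaves your write-up no more self-contained than the paper's bare citation. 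A second, smaller flaw: the identity $cX_i^{\pm}=X_i^{\pm}\si_i^{\mp 1}(c)$ follows from \eqref{eq:tgwa-rel-3} only for $c$ in the image of $R$, not for arbitrary $c\in C_{\boldsymbol{0}}$, which a priori is strictly larger than $R$ (that this can fail is the whole point of Theorem \ref{thm:consistency}); so of your two suggested routes to $C_{\pm\Be_i}=X_i^{\pm}C_{\boldsymbol{0}}$ only the mirrored induction, pushing coefficients to the right, is valid.

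The missing rearrangement step does admit a short elementary proof, so you could close the gap without any citation. Among all pairs of positions $a<b$ carrying the same index with opposite signs, pick one with $b-a$ minimal; say the index is $j$, with $X_j^+$ at $a$ and $X_j^-$ at $b$ (the other case is symmetric). Minimality forces every letter strictly between to have index $\neq j$. Let $d$ be the rightmost position in $\{a,\dots,b-1\}$ whose letter has sign $+$ (it exists, since $a$ qualifies); then every letter in positions $d+1,\dots,b$ has sign $-$. Compare the letters at $d$ and $d+1$: if they share an index, you have an adjacent contractible pair and are done; otherwise swap them by \eqref{eq:tgwa-rel-2} and repeat, the moved $+$ letter advancing one step each time. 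Either a contractible pair appears along the way, or the $+$ letter reaches the right end of the segment: if it is the original $X_j^+$ (the case $d=a$) it becomes adjacent to $X_j^-$ and you contract; if it has index $m\neq j$, the final swap puts $X_j^-$ at position $b-1$ while $X_j^+$ has not moved, so the minimal gap has strictly decreased, and induction on $b-a$ finishes the argument. With this paragraph added (and the right-handed equality obtained by the mirrored induction), your proof is complete and self-contained, which is more than the paper itself provides.
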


\subsection{Graded isomorphism problem}
Let $A=\CA(R,\si,t)$ and $A'=\CA(R',\si',t')$ be two TGWAs of the same rank $n$. We say that $A$ and $A'$ are \emph{graded isomorphic} if there exists a ring isomorphism $\varphi:A\to A'$ such that $\varphi(A_{\boldsymbol{d}})\subseteq A'_{\boldsymbol{d}}$ for all $\boldsymbol{d}\in\Z^n$.

Put $\si_i=(\si_i^{1/2})^2$.
In this section we show that, under certain conditions, two TGWAs are graded isomorphic if and only if their respective TGWDs $(R,\si,t)$ and $(R',\si',t')$ are equivalent in the following sense.

\begin{Definition}
Two TGWDs of rank $n$, $(R,\si,t)$ and $(R',\si',t')$, are \emph{equivalent} if there exists a ring isomorphism $\psi:R\to R'$ such that $\psi\circ \si_i=\si_i'\circ\psi$ and $\psi(t_i)\in (R')^\times \cdot t_i'$ for all $i$;
\end{Definition}

Here $R^\times$ denotes the group of units (invertible elements) in a ring $R$.
To state the result, we will need the following definitions.

\begin{Definition} Let $(R,\si,t)$ be a TGWD.
\begin{enumerate}[{\rm (i)}]
\item $(R,\si,t)$ is \emph{commutative} if the ring $R$ is commutative;
\item $(R,\si,t)$ has \emph{scalar units} if $\si_i(u)=u$ for every $u\in R^\times$  and every $i$.
\end{enumerate}
\end{Definition}

\begin{Proposition}\label{prop:gradediso}
Let $(R,\si,t)$ and $(R',\si',t')$ be two regular, consistent, commutative, TGWDs of rank $n$ having scalar units. Let $A=\CA(R,\si,t)$ and $A'=\CA(R',\si',t')$ be the corresponding TGWAs.
Then $A$ and $A'$ are graded isomorphic if and only if $(R,\si,t)$ is equivalent to $(R',\si',t')$.
\end{Proposition}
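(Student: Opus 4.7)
The plan is to prove the two implications separately, using Theorem~\ref{thm:consistency} to identify $R$ with $A_{\boldsymbol{0}}$ on both sides and Lemma~\ref{lem:cyclic-components} to constrain the images of the generators.

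For the sufficient direction, given an equivalence $\psi\colon R\xrightarrow{\sim}R'$ with $\psi(t_i)=v_i t_i'$ for units $v_i\in(R')^{\times}$, I would construct $\varphi\colon A\to A'$ on generators by setting $\varphi|_R=\psi$ and $\varphi(X_i^\pm)=a_i^\pm(X_i')^\pm$, with units $a_i^\pm\in(R')^\times$ chosen so that relation~\eqref{eq:tgwa-rel-1} holds. Relations~\eqref{eq:tgwa-rel-2} and~\eqref{eq:tgwa-rel-3} follow from $\psi\circ\sigma_i=\sigma_i'\circ\psi$ and commutativity of $R'$, while~\eqref{eq:tgwa-rel-1} reduces to a single equation on $a_i^+\sigma_i'(a_i^-)$ which can be solved in $R'$. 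By the universal property of $\mathcal{C}(R,\sigma,t)$ this yields a graded ring homomorphism into $A'$; its kernel, being graded with trivial degree-zero part (since $\psi$ is injective on $R\cong A_{\boldsymbol{0}}$), is contained in the ideal collapsed when passing to $A$, so the map descends to $A\to A'$. Symmetry with $\psi^{-1}$ yields the inverse.

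For the necessary direction, given a graded isomorphism $\varphi\colon A\to A'$, I would first restrict to $A_{\boldsymbol{0}}$ to obtain, via Theorem~\ref{thm:consistency}, a ring isomorphism $\psi\colon R\to R'$. Next I would verify that $A_{\pm\Be_i}$ is free of rank one over $R$ with basis $X_i^\pm$: if $rX_i^+=0$, multiplication by $X_i^-$ on the right gives $r\sigma_i^{1/2}(t_i)=0$, and regularity of $t_i$ forces $r=0$. Combined with Lemma~\ref{lem:cyclic-components}, this lets me write $\varphi(X_i^\pm)=u_i^\pm(X_i')^\pm$ for uniquely determined $u_i^\pm\in R'$, and the symmetric argument applied to $\varphi^{-1}$ shows $u_i^\pm\in(R')^{\times}$. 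Applying $\varphi$ to the consequence $X_i^+r=\sigma_i(r)X_i^+$ of~\eqref{eq:tgwa-rel-3}, rearranging using commutativity of $R'$, and cancelling $(X_i')^+$ by freeness then gives $\psi\circ\sigma_i=\sigma_i'\circ\psi$.

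It remains to verify $\psi(t_i)\in(R')^{\times}\cdot t_i'$, and this is where I expect the main obstacle. Applying $\varphi$ to~\eqref{eq:tgwa-rel-1} yields $\psi(\sigma_i^{1/2}(t_i))=c_i\cdot(\sigma_i')^{1/2}(t_i')$ with $c_i=u_i^+\sigma_i'(u_i^-)\in(R')^{\times}$, but this involves the half-automorphisms rather than $t_i$ and $t_i'$ themselves. To strip these off, I would observe that $\tau_i:=\psi\,\sigma_i^{1/2}\,\psi^{-1}$ is a ring automorphism of $R'$ whose square is $\sigma_i'$; the commutativity and scalar-units hypotheses (together with the extra rigidity coming from the requirement that $\tau_i$ commute with the other $(\sigma_j')^{1/2}$) should force $\tau_i=(\sigma_i')^{1/2}$. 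Once this compatibility $\psi\circ\sigma_i^{1/2}=(\sigma_i')^{1/2}\circ\psi$ is in hand, applying $(\sigma_i')^{-1/2}$ to the identity above gives $\psi(t_i)=(\sigma_i')^{-1/2}(c_i)\cdot t_i'$ with unit coefficient, completing the equivalence.
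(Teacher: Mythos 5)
Up to its final step your outline follows the same route as the paper's proof. In the sufficiency direction you define the map on generators just as the paper does (the paper takes $a_i^+=1$ and $a_i^-$ equal to the unit relating $\psi(t_i)$ and $t_i'$), and in the necessity direction you use Theorem \ref{thm:consistency} to get $\psi=\Psi|_R$, Lemma \ref{lem:cyclic-components} plus regularity of the $t_i$ to write $\Psi(X_i^\pm)=u_i^\pm (X_i')^\pm$ with $u_i^\pm\in (R')^\times$, and relation \eqref{eq:tgwa-rel-3} to deduce $\psi\circ\si_i=\si_i'\circ\psi$; these steps are correct and essentially identical to the paper's (the paper cancels the regular element ${\si_i'}^{\pm 1/2}(t_i')$ where you cancel $(X_i')^\pm$ by freeness, which is the same mechanism).

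The genuine gap is the last step, which you yourself flag. You need $\psi(t_i)\in(R')^\times t_i'$, but the graded isomorphism only gives $\psi\big(\si_i^{1/2}(t_i)\big)=c_i\,{\si_i'}^{1/2}(t_i')$ with $c_i\in(R')^\times$, and your proposed bridge --- that $\tau_i=\psi\,\si_i^{1/2}\,\psi^{-1}$ must equal ${\si_i'}^{1/2}$ because $\tau_i^2=\si_i'$ --- is not a valid principle: square roots of a ring automorphism are far from unique, even for shift automorphisms of polynomial rings. For instance on $\K[u_1,u_2]$ both $(u_1,u_2)\mapsto(u_1-\tfrac{1}{2},u_2)$ and $(u_1,u_2)\mapsto(u_1-\tfrac{1}{2},-u_2)$ square to the shift $(u_1,u_2)\mapsto(u_1-1,u_2)$, and neither commutativity nor the scalar-units hypothesis excludes such roots, since all of them fix $\K^\times$. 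The auxiliary rigidity you invoke is also unproven: from $\psi\circ\si_j=\si_j'\circ\psi$ you only know that $\tau_i$ commutes with $\psi\,\si_j^{1/2}\,\psi^{-1}$, not with ${\si_j'}^{1/2}$. Note moreover that relations \eqref{eq:tgwa-rels} can be rewritten so as to involve only the full automorphisms $\si_i$ and the elements $\si_i^{\pm 1/2}(t_i)$, so no purely formal consequence of the existence of $\Psi$ can identify $\tau_i$ with ${\si_i'}^{1/2}$ as automorphisms; what is actually needed is a compatibility statement on the element $t_i$ itself. This is precisely how the paper concludes: it invokes the identity $\psi(t_i)={\si_i'}^{\mp 1/2}\big(\psi(\si_i^{\pm 1/2}(t_i))\big)$ and then computes ${\si_i'}^{\mp 1/2}\big(\Psi(X_i^\pm X_i^\mp)\big)\in(R')^\times\, t_i'$ directly from $\Psi(X_i^\pm)=u_i^\pm(X_i')^\pm$. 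Your write-up would have to supply that identity (or a substitute argument confined to $t_i$); the square-root rigidity you sketch does not deliver it, and the same compatibility is silently needed in your sufficiency direction when you assert that relation \eqref{eq:tgwa-rel-1} reduces to one solvable equation, since solvability requires $\psi(\si_i^{\pm 1/2}(t_i))$ to be a unit multiple of ${\si_i'}^{\pm 1/2}(t_i')$, which does not follow formally from $\psi(t_i)\in(R')^\times t_i'$ and $\psi\circ\si_i=\si_i'\circ\psi$ alone.
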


\begin{proof}
Suppose that $(R,\si,t)$ and $(R',\si',t')$ are equivalent and let $\psi:R\to R'$ be a ring isomorphism such that $\psi\circ\si_i=\si_i'\circ\psi$ and $\psi(t_i)= r_i\cdot t_i'$ for all $i$, where $r_i\in R^\times$. Define $\Psi(X_i^+)={X_i'}^+$, $\Psi(X_i^-)=r_i {X_i'}^-$, and $\Psi(r)=\psi(r)$ for all $r\in R$. It is straightforward to verify that Relations \eqref{eq:tgwa-rels} are preserved by $\Psi$. Thus $\Psi$ extends to a ring homomorphism $\Psi:\CC(R,\si,t)\to\CC(R',\si',t')$. Clearly $\Psi$ is a graded homomorphism. Replacing $\psi$ by $\psi^{-1}$ we obtain a graded homomorphism which is the inverse of $\Psi$. Thus $\Psi$ is an isomorphism of $\Z^n$-graded rings. Since $\Psi$ is graded we have $\Psi(\CI)\subseteq \CI'$ where $\CI$ and $\CI'$ are as in the definition of TGWA. Thus $\Psi$ induces a graded isomorphism of the quotients $\CA(R,\si,t)\to\CA(R',\si',t')$.

Conversely, suppose $\Psi:A\to A'$ is a graded isomorphism.
That is, $\Psi$ is a ring isomorphism such that 
$\Psi(A_{\boldsymbol{d}})=(A')_{\boldsymbol{d}}$ for every $\boldsymbol{d}\in\Z^n$.
By Theorem \ref{thm:consistency} we can identify $A_{\boldsymbol{0}}\cong R$ and $(A')_{\boldsymbol{0}}\cong R'$. Thus taking $\boldsymbol{d}=\boldsymbol{0}$ we get a ring isomorphism $\psi=\Psi|_{R}:R\to R'$. To show that $\psi$ satisfies the required properties we also need to consider the case $\boldsymbol{d}=\pm \Be_i$.
%
% we first show that if there is a graded isomorphism $A\to A'$, then we may without loss of generality assume that $\varphi(X_i^\pm) = {X_i'}^\pm$ for all $i$. More precisely, we show that $(R',\si',t')$ may be replaced by an equivalent TGWD $(R',\si',\widetilde{t})$ such that the composition $A\to A'\to \CA(R',\si',\widetilde{t})$ maps $X_i^{\pm}$ to $\widetilde{X}_i^\pm$.
%Then, in particular, $\Psi|_{A_{\pm \Be_i}}:A_{\pm\Be_i}\to (A')_{\pm\Be_i}$ is a bijection, and 
%
By Lemma \ref{lem:cyclic-components},
$A_{\pm \Be_i}=RX_i^\pm$ and $A'_{\pm \Be_i}=R'{X'_i}^\pm$
(denoting the generators of $A'$ by ${X'_i}^\pm$).
From $\Psi(A_{\pm\Be_i})=(A')_{\pm\Be_i}$ we conclude that $\Psi(X_i^\pm)={r_i'}^\pm  {X_i'}^\pm $ for some ${r_i'}^\pm \in R'$, and $\Psi^{-1}({X_i'}^\pm) = r_i^\pm X_i^+$ for some $r_i^\pm\in R$. Combining these equalities we get
\[
X_i^\pm = \Psi^{-1}\big(\Psi(X_i^\pm)\big) = \psi^{-1}({r_i'}^\pm ) r_i^\pm  X_i^\pm.
\]
Multiplying from the right by $X_i^\mp$ and using \eqref{eq:tgwa-rel-1} we get
\[
\si_i^{\pm 1/2}(t_i) = \psi^{-1}({r_i'}^\pm) r_i^\pm  \si_i^{\pm 1/2}(t_i)
\]
hence
\[
\si_i^{\pm 1/2}(t_i) \cdot (1 - \psi^{-1}({r_i'}^\pm) r_i^\pm  ) = 0
\]
Since $t_i$ is regular in $R$, the same is true for $\si_i^{\pm 1/2}(t_i)$. Hence
\[
\psi^{-1}({r_i'}^\pm ) r_i^\pm  = 1
\]
i.e. $r_i^\pm $ and ${r_i'}^\pm $ are units.

%Thus $(R',\si',t')$ is equivalent to $(R',\si',\widetilde{t})$ where $\widetilde{t}_i = {r_i'}^+{r_i'}^- t_i$. Slightly modifying the proof of Step 1 one shows that there is a graded isomorphism $\CA(R',\si',t')\to\CA(R',\si',\widetilde{t})$ associated to $\psi=\Id_{R'}$ and determined by $\widetilde{X}_i^\pm\mapsto {r_i'}^\pm X_i^\pm$. Composing this map with the given graded isormophism $A\to A'$ gives the claim.

%\medskip 

%\noindent\underline{Step 3}:
%Now suppose $\varphi:A\to A'$ is a graded isomorphism such that $\varphi(X_i^\pm)={X_i'}^\pm$. Then
%$\psi=\varphi|_{A_{\boldsymbol{0}}}:R\cong A_{\boldsymbol{0}}\to (A')_{\boldsymbol{0}}\cong R'$ is a ring isomorphism.  
Next, applying $\Psi$ to the relation $X_i^\pm \si_i^{\mp 1/2}(r)= \si_i^{\pm 1/2}(r) X_i^\pm$ we obtain:
\begin{equation}
{r_i'}^\pm {X'_i}^\pm \psi\big(\si_i^{\mp 1/2}(r)\big) = \psi\big(\si_i^{\pm 1/2}(r)\big) {r_i'}^\pm {X'_i}^\pm
\end{equation}
Dividing both sides by the unit ${r_i'}^\pm$ and 
multiplying from the right by ${X'}_i^\mp$ and using \eqref{eq:tgwa-rel-1} and \eqref{eq:tgwa-rel-3}, we get
\begin{equation}
{\si'_i}^{\pm 1/2}(t_i') \cdot {\si'_i}^{\pm 1}\circ \psi\circ \si_i^{\mp 1/2} (r)
=
\psi(\si_i^{\pm 1/2}(r)\big) {\si'_i}^{\pm 1/2}(t_i')
\end{equation}
Since $t_i'$ is central and not a zero-divisor, the same is true for ${\si_i'}^{\pm 1/2}(t_i')$ and we may cancel it from both sides. Furthermore since the equation holds for all $r\in R$ we obtain
\begin{equation}
{\si_i'}^{\pm 1}\circ \psi = \psi \circ \si_i^{\pm 1}
\end{equation}
for all $i$.
Therefore
\begin{align*}
\psi(t_i) &={\si_i'}^{\pm 1/2}\circ\psi\circ \si_i^{\pm 1/2}(t_i) \\
&={\si_i'}^{\mp 1/2}\circ\Psi(X_i^\pm X_i^\mp) \\
&={\si_i'}^{\mp 1/2}\big({r_i'}^\pm {X_i'}^\pm {r_i'}^{\mp} {X_i'}^\mp) \\
&={\si_i'}^{\mp 1/2}\big({r_i'}^\pm {\si_i'}^{\pm 1}({r_i'}^{\mp})\cdot {\si_i'}^{\pm 1/2}(t_i')\big) \\
&= {\si_i'}^{\mp 1/2}\big({r_i'}^\pm {\si_i'}^{\pm 1}({r_i'}^{\mp})\big)\cdot t_i' \\
&\in (R')^\times\cdot t_i'
\end{align*}
This proves that $\psi$ gives an equivalence between $(R,\si,t)$ and $(R',\si',t')$.
\end{proof}

\begin{Remark}
Notice that, as a consequence, if we have two TGWAs satisfying the hypotheses of the theorem, defined respectively over the rings $R$ and $R'$, that are graded isomorphic, then the rings are isomorphic, so we can identify $R$ and $R'$.
\end{Remark}
Our goal is to classify TGWAs defined over a polynomial ring, up to graded isomorphism, so we introduce the following definition.
\begin{Definition}\label{def:polyequiv}
Let $\K$ be a field of characteristic zero, and $R=\K[u_1,u_2,\ldots,u_m]$. Let $p=(p_1,\ldots,p_n)$ (resp. $p'=(p_1',\ldots,p_n')$) be an $n$-tuple of nonzero monic elements of $R$, and $\alpha=(\al_{ij})\in M_{m\times n}(\K)$ (resp. $\alpha'=(\al'_{ij})\in M_{m'\times n}(\K)$) and $\si_i\in\Aut_\K(R)$ given by $\si_i(u_j)=u_j-\al_{ji}$ (resp. $\si'_i\in\Aut_\K(R')$, given by $\si'_i(u_j)=u'_j-\al'_{ji}$). We say that the pairs $(\alpha,p)$ and $(\alpha',p')$ are \emph{equivalent} is there exists an automorphism $\psi\in\Aut(R)$ such that $\psi\circ\si_i=\si'_i\circ\psi$ and $\psi(p_i)\in \K^\times \cdot p_i'$ for all $1\leq i\leq n$.
\end{Definition}
\begin{Example}\label{exa:GLm}
Let $g\in\GL_m(\K)=\Aut(\oplus_{i=1}^m \K u_i)$ be an invertible $m\times m$ matrix, then $g$ gives an automorphism $\psi_g\in\Aut(R)$ by acting linearly on the variables. In this case, we have that if $\psi_g\circ\si_i=\si'_i\circ\psi_g$, then $\alpha'=g\alpha$ and $p'=(p_1',\ldots, p_n')=(p_1\circ g,\ldots, p_n\circ g)$. This shows that there is a $\GL_m(\K)$-action on the set of pairs $(\alpha,p)$ as defined in Def. \ref{def:polyequiv}, and that $\GL_m(\K)$-orbits are contained in the equivalence classes.
\end{Example}
\begin{Example}
Let $\alpha=0$, and $\psi\in\Aut(R)$ be any automorphism of the polynomial ring, then $\psi\circ\si_i=\si'_i\circ\psi$ is always satisfied, so $(0,p)$ and $(0,\psi(p))$ are always equivalent no matter what $\psi$ is. For example, we could choose $\psi\in\Aut(R)$ defined by $\psi(u_j)=u_j+q_j(u_{j+1},\ldots,u_m)$ for some polynomials $q_j$, $1\leq j\leq m$. In particular, this shows that the equivalence classes of pairs $(\alpha,p)$ are bigger in general than the $\GL_m(\K)$-orbits described in Example \ref{exa:GLm}.
\end{Example}
In the case of polynomial rings $R$, Proposition \ref{prop:gradediso} gives us the following.
\begin{Corollary}
Let $R,\si,\si',p,p'$ as in Definition \ref{def:polyequiv} such that $(R,\sigma,p)$ and $(R,\sigma',p')$ are consistent TGWDs.

Then the TGWAs $A=\CA(R,\si,p)$ and $A'=\CA(R,\si',p')$ are isomorphic as graded $\K$-algebras if and only if $(\al,p)$ and $(\al',p')$ are equivalent.
\end{Corollary}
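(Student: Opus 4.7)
The plan is to deduce this corollary directly from Proposition \ref{prop:gradediso}. I first verify the four hypotheses of that proposition---regularity, consistency, commutativity, and scalar units---for both TGWDs $(R,\si,p)$ and $(R,\si',p')$, and then observe that in this polynomial setting the abstract TGWD equivalence of Proposition \ref{prop:gradediso} specializes exactly to the pair equivalence from Definition \ref{def:polyequiv}.

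Verification of the hypotheses is routine. Since $R=\K[u_1,\ldots,u_m]$ is an integral domain, every nonzero element is regular, so the nonzero monic polynomials $p_i$ and $p_i'$ make both data regular. Consistency is part of the hypothesis of the corollary, and commutativity is immediate from $R$ being a polynomial ring. For scalar units, I would use that $R^\times=\K^\times$ for a polynomial ring over a field, together with the fact that each $\si_i$ (defined by $\si_i(u_j)=u_j-\al_{ji}$) fixes $\K$ pointwise, so every unit is fixed by every $\si_i$ and, in particular, by every $\si_i^{1/2}$; the same holds for $\si_i'$.

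Applying Proposition \ref{prop:gradediso}, $A$ and $A'$ are graded isomorphic if and only if there exists a ring isomorphism $\psi\colon R\to R$ (which is then an automorphism of $R$, since source and target coincide) satisfying $\psi\circ\si_i=\si_i'\circ\psi$ and $\psi(p_i)\in R^\times\cdot p_i'$ for every $i$. The last step is the observation that $R^\times=\K^\times$, which turns the condition $\psi(p_i)\in R^\times\cdot p_i'$ into $\psi(p_i)\in\K^\times\cdot p_i'$---precisely the defining condition of pair equivalence in Definition \ref{def:polyequiv}. There is no serious obstacle in this argument; the substance is carried entirely by Proposition \ref{prop:gradediso}, and what remains is bookkeeping to confirm the hypotheses and to identify the two notions of equivalence in the polynomial setting.
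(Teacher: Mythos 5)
Your argument matches the paper's own proof: both deduce the corollary directly from Proposition \ref{prop:gradediso} by checking regularity (from $R$ being a domain and the $p_i$ nonzero), commutativity, and scalar units (from $R^\times=\K^\times$), and then identifying TGWD equivalence with pair equivalence because $R^\times=\K^\times$. The only point the paper makes explicitly that you gloss over is that, since the graded isomorphism is one of $\K$-algebras, the resulting $\psi$ is automatically $\K$-linear, so it really is an automorphism in the sense required by Definition \ref{def:polyequiv}; this is a one-line remark, not a gap of substance.
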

\begin{proof}
This follows directly from Proposition \ref{prop:gradediso} with the observation that, given our setup, the TGWDs $(R,\si,p)$ and $(R,\si',p')$ are also regular, commutative and have scalar units. In addition, the automorphism $\psi$ giving the equivalence has to be $\K$-linear.
% and since both $p_i$ and $p'_i$ are monic we get indeed that $\psi(p_i)=p_i'$ for all $1\leq i\leq n$.
\end{proof}
%\begin{Corollary}
%Let $\K$ be a field of characteristic zero and $R=\K[u_1,u_2,\ldots,u_m]$ be a polynomial algebra over $\K$ in $m$ variables.
%Let $t=(p_1,\ldots,p_n), t'=(p_1',\ldots,p_n')$ be two $n$-tuples of nonzero monic elements of $R$, and
%$A=(\al_{ij})\in M_{m\times n}(\K)$
%and $\si_i\in\Aut_\K(R)$ given by $\si_i(u_j)=u_j-\al_{ji}$.
%
%Then the TGWAs $A=\CA(R,\si,t)$ and $A'=\CA(R,\si,t')$ are graded isomorphic if and only if $p_i=p_i'$ for all $i$.
%\end{Corollary}

\section{Orbital solutions}
In this section, unless otherwise stated, $\K$ can be any field with $\op{char}\K\neq 2$.
Equipping the gradation monoid $\N^m$ with an order, such as lexicographical, each nonzero element of $R$ has a unique leading monomial. We say an element of $R$ is monic if its leading monomial has coefficient $1$. Note that the set of monic elements of $R$ is invariant under the automorphisms $\si_i$. We say that $p=(p_1,\ldots,p_n)\in R^n$ is monic if $p_i$ is (nonzero and) monic for each $i$.

Let $\Irr(R)$ denote the set of monic irreducible polynomials in $R$. The commuting automorphisms $\si_i$ induce an action of the group $\Z^n$ on $\Irr(R)$ via 
\begin{equation}\label{eq:Zn-action}
(a_1,a_2,\ldots,a_n).p = \si_1^{a_1}\circ\si_2^{a_2}\circ\cdots\circ\si_n^{a_n}(p)\end{equation}
Since $\operatorname{char} \K\neq 2$, $\si_i$ have square roots in $\Aut_\K(R)$ given by $\si_i^{1/2}(u_j)=u_j-\frac{1}{2}\al_{ji}$.

\begin{Definition}
Let $\CO\in\Irr(R)/\Z^n$ be an orbit of monic irreducible polynomials with respect to the $\Z^n$ action defined in \eqref{eq:Zn-action}.
A monic element $p=(p_1,p_2,\ldots,p_n)\in R^n$ is \emph{$\CO$-orbital} if for each $i\in\{1,2,\ldots,n\}$, every irreducible factor of $p_i$ belongs $\si_i^{1/2}(\CO)=\big\{\si_i^{1/2}(q)\mid q\in\CO\big\}$.
\end{Definition}

%The following example motivates the terminology.
%
%\begin{Example}
%Suppose $m=1$ and $\al_{1i}\in\Z$ for all $i$ and let $f=u_1$. Then the corresponding orbit is $\CO=\Z^n u_1=\{u_1-k\gcd(\al_{11},\ldots,\al_{1n})\mid k\in\Z\}$.
%\end{Example}

\begin{Theorem}\label{theorem:integral-product}
Let $p$	be any monic solution to \eqref{eq:consistency-eqs}. Then there exists a unique finite subset 
\begin{equation}
\big\{(\CO_1,p^{(1)}),(\CO_2,p^{(2)}),\ldots,(\CO_k,p^{(k)})\big\}\subseteq (\Irr(R)/\Z^n) \times R^n
\end{equation}
such that
\begin{enumerate}[{\rm (i)}]
\item $p=p^{(1)}p^{(2)}\cdots p^{(k)}$ as an equality in the direct product ring $R^n$,
\item $p^{(i)}$ is a monic solution to \eqref{eq:consistency-eqs} for each $i\in\{1,2,\ldots,k\}$,
\item $p^{(i)}$ is $\CO_i$-orbital for each $i\in\{1,2,\ldots,k\}$,
\item $\CO_i\neq\CO_j$ for all $i\neq j$.
\end{enumerate}
\end{Theorem}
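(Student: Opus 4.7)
The plan is to define $p^{(\CO)}$ directly from the irreducible factorization of $p$ and then verify the required properties. For each $\Z^n$-orbit $\CO\in\Irr(R)/\Z^n$, let $p_i^{(\CO)}$ be the product, with multiplicity, of the monic irreducible factors of $p_i$ that lie in $\si_i^{1/2}(\CO)$. Since $\si_i^{1/2}$ is an automorphism of $R$ permuting $\Irr(R)$, the family $\{\si_i^{1/2}(\CO)\}_{\CO\in\Irr(R)/\Z^n}$ partitions $\Irr(R)$, so unique factorization in $R$ gives $p_i=\prod_\CO p_i^{(\CO)}$ as monic polynomials. Because $p$ has only finitely many distinct irreducible factors, only finitely many orbits $\CO$ produce a nontrivial $p^{(\CO)}$; by construction, each $p^{(\CO)}$ is $\CO$-orbital.

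The crux is to show that each $p^{(\CO)}$ individually satisfies \eqref{eq:consistency-eqs}. The key observation is that since $\CO$ is $\si_i$-invariant for every $i$ (as it is a $\Z^n$-orbit), we have $\si_i^{-1/2}(\CO)=\si_i^{-1/2}\si_i(\CO)=\si_i^{1/2}(\CO)$. Iterating this identity and using commutativity of the $\si_i$, all four half-shifted sets $\si_i^{\pm 1/2}\si_j^{\pm 1/2}(\CO)$ coincide in a single $\Z^n$-orbit $\tilde\CO_{ij}$, and all four half-shifted sets $\si_i^{\pm 1/2}\si_j^{\pm 1/2}\si_k^{1/2}(\CO)$ coincide in a single $\Z^n$-orbit $\tilde\CO_{ijk}$. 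Moreover, since $\si_i^{1/2}\si_j^{1/2}$ and $\si_i^{1/2}\si_j^{1/2}\si_k^{1/2}$ each induce a bijection on $\Irr(R)/\Z^n$, distinct $\CO$ produce distinct $\tilde\CO_{ij}$ and distinct $\tilde\CO_{ijk}$.

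Substituting $p_i=\prod_\CO p_i^{(\CO)}$ into the binary equation \eqref{eq:binary} yields an equality of monic products over $\CO$ in which the $\CO$-th block on each side has all its irreducible factors inside $\tilde\CO_{ij}$. Since the $\tilde\CO_{ij}$ are pairwise disjoint as $\CO$ varies, unique factorization in the UFD $R$ (together with monicity, which is preserved by translations $\si_i^{\pm 1/2}$ under lex order on $\N^m$) forces the $\CO$-blocks on the two sides to match individually, which is precisely the binary consistency equation for $p^{(\CO)}$. The ternary equation \eqref{eq:ternary} is handled identically using $\tilde\CO_{ijk}$. Uniqueness of the decomposition is then immediate: any other decomposition satisfying (i)--(iv) must, by unique factorization in $R$ together with the orbital requirement, assign each irreducible factor of $p_i$ to the same $p^{(\CO)}$.

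I expect the main conceptual step to be the observation that the $\si_i$-invariance of $\CO$ collapses the various half-shifted orbits into one, which is precisely what enables the consistency relations to decouple across orbits via the UFD property. The remaining bookkeeping (disjointness of orbits for distinct $\CO$, preservation of monicity by translations, and finiteness of the support) is routine.
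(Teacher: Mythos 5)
Your argument is essentially the same as the paper's: define $p^{(\CO)}$ as the $\si_i^{1/2}(\CO)$-part of the irreducible factorization of $p_i$, then use the UFD property to see that the consistency equations decouple orbit by orbit. One small point in your favor: the paper's phrasing ("every irreducible factor of the LHS divides the RHS and vice versa, hence equality") glosses over multiplicities, whereas your observation that the orbit-parts $\tilde\CO_{ij}$ are pairwise disjoint and $\si_i^{\pm1/2}\si_j^{\pm1/2}(\CO)$ all coincide handles the bookkeeping with multiplicity cleanly.
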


\begin{proof}
Let $\CO\in\Irr(R)/\Z^n$ be any orbit. Let $p^{\CO}=(p^{\CO}_1,p^{\CO}_2,\ldots,p^{\CO}_n)\in R^n$, where $p^{\CO}_i$ is the product of all monic irreducible factors of $p_i$ belonging to $\si_i^{1/2}(\CO)$. By convention $p_i^{\CO}=1$ if there are no such irreducible factors. Then clearly $p=\prod p^{\CO}$ where $\CO$ ranges over all orbits in $\Irr(R)/\Z^n$. Moreover each $p^{\CO}$ is $\CO$-orbital. So it remains to prove that $p^{\CO}$ is a solution to \eqref{eq:consistency-eqs} for each $\CO$. 

Fix $i,j\in\{1,2,\ldots,n\}, i\neq j$.
Let $f$ be any irreducible factor of $p_i^{\CO}(u-\al_j/2)p_j^\CO(u-\al_i/2)$. Without loss of generality suppose that $f$ divides $p_i^{\CO}(u-\al_j/2)$. Then $\si_j^{-1/2}(f)$ divides $p_i^{\CO}(u)$. Hence $f$ belongs to $\si_i^{1/2}\si_j^{1/2}(\CO)$. On the other hand $f$ divides $p_i(u-\al_j/2)$, and hence since $p$ solves \eqref{eq:consistency-eqs}, $f$ divides $p_i(u+\al_j/2)p_j(u+\al_i/2)$. Since $f$ is irreducible, $f$ divides either $p_i(u+\al_j/2)$ or $p_j(u+\al_i/2)$. If $f$ divides $p_i(u+\al_j/2)$ then $\si_j^{1/2}(f)$ divides $p_i(u)$, hence $f$ divides $p_i^\CO(u+\al_j/2)$. Similarly in the other case $f$ divides $p_j^\CO(u+\al_i/2)$. This shows that any irreducible factor in the left hand side of \eqref{eq:binary} for $p^\CO$ divides the right hand side. Symmetrically the right hand side divides the left hand side. Thus $p^\CO$ solve \eqref{eq:binary}.

An analogous argument shows that $p^\CO$ also satisfies the relation \eqref{eq:ternary}.
\end{proof}

\section{Reduction to Rank Two}
In this section, unless otherwise stated, $\K$ is an arbitrary field of characteristic not two.
\begin{Proposition}\label{prop:indices-ij}
Let $p$ be a monic $\CO$-orbital solution to \eqref{eq:binary}, where $\CO=\Z^n q_0$ ($q_0\in\Irr(R)$) and let $i,j\in\{1,\ldots,n\}$, $i\neq j$, such that $p_i$ and $p_j$ are not constant. Then there exists a pair of nonnegative integers $(r_{ij},s_{ij})\neq (0,0)$ such that
\begin{equation}
\label{eq:fixq0} \si_i^{r_{ij}}\si_j^{s_{ij}}q_0=q_0.
\end{equation} 
\end{Proposition}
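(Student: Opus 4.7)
The plan is to translate \eqref{eq:binary} into a combinatorial identity on multiplicities, encode it in a group algebra, and then derive a contradiction by evaluating at a positive real-valued character.

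First, using $\CO$-orbitality with $\CO=\Z^n q_0$ and the shorthand $\si^a:=\si_1^{a_1}\circ\cdots\circ\si_n^{a_n}$, I would write each component as
\[
p_i = \prod_{\bar a\in\Z^n/H} \bigl(\si_i^{1/2}\si^a(q_0)\bigr)^{m_{i,\bar a}},
\]
where $H=\{a\in\Z^n:\si^a(q_0)=q_0\}$ is the stabilizer and the exponents $m_{i,\bar a}\in\N$ are finitely supported. Substituting into \eqref{eq:binary}, using $p_k(u\mp\al_\ell/2)=\si_\ell^{\pm 1/2}(p_k)$ and commutativity of the $\si$'s, both sides become products of polynomials of the form $\si_i^{1/2}\si_j^{1/2}\si^b(q_0)$, which are pairwise distinct monic irreducibles as $\bar b$ ranges over $\Z^n/H$. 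Comparing multiplicities in the UFD $R$ then yields
\begin{equation*}
m_{i,\bar a}+m_{j,\bar a} = m_{i,\overline{a+\Be_j}}+m_{j,\overline{a+\Be_i}} \qquad \forall\,\bar a\in\Z^n/H.
\end{equation*}

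This identity only couples points within a single orbit of the subgroup $\Z\Be_i+\Z\Be_j\subseteq\Z^n$ acting on $\Z^n/H$, and every such orbit is isomorphic to $\Z^2/H_{ij}$, where $H_{ij}:=H\cap(\Z\Be_i+\Z\Be_j)$. Fixing one orbit, I would encode the restrictions of $m_i$ and $m_j$ as elements $A, B$ of the group algebra $\Z[\Z^2/H_{ij}]$ with nonnegative integer coefficients; a short calculation then turns the identity into
\[
x(y-1)A + y(x-1)B = 0, \qquad x=[\Be_i],\ y=[\Be_j].
\]

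The heart of the argument is a contradiction by evaluating at a real character. Assume that no $(r,s)\in\N^2\setminus\{(0,0)\}$ satisfies $r\Be_i+s\Be_j\in H$, i.e.\ $H_{ij}$ contains no nonzero nonnegative element. As a subgroup of $\Z^2$ with this property, $H_{ij}$ is either trivial or generated by $\pm(a,-b)$ for some $a,b>0$. In each case one can explicitly construct a multiplicative character $\chi\colon\Z^2/H_{ij}\to\R_{>0}$ satisfying $\chi(\Be_i),\chi(\Be_j)>1$: take $\chi(\Be_i)=\chi(\Be_j)=2$ in the trivial case, and $\chi(\Be_i)=2^b$, $\chi(\Be_j)=2^a$ in the $\Z(a,-b)$ case (so that $\chi(\Be_i)^a=\chi(\Be_j)^b$). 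Evaluating the group-ring identity at $\chi$ yields a sum of two nonnegative real numbers with strictly positive coefficients equal to zero, forcing $\chi(A)=\chi(B)=0$. Since $\chi$ is strictly positive on every group element and $A, B$ have nonnegative coefficients, this in turn forces $A=B=0$ on every orbit, so $p_i=p_j=1$, contradicting the hypothesis that both are nonconstant.

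The main obstacle I anticipate is the first step: verifying carefully that $\{\si_i^{1/2}\si_j^{1/2}\si^a(q_0):\bar a\in\Z^n/H\}$ is a set of pairwise distinct monic irreducibles (which follows from $H$ being precisely the stabilizer of $q_0$, together with the fact that $\si_i^{1/2}\si_j^{1/2}$ preserves monicity and irreducibility), and then reindexing the products on both sides of \eqref{eq:binary} to extract the combinatorial identity. The classification of subgroups of $\Z^2$ avoiding $\N^2\setminus\{(0,0)\}$ and the explicit construction of $\chi$ are then elementary.
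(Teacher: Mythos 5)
Your proposal is correct, but it follows a genuinely different route from the paper's proof. The paper argues directly on irreducible factors: starting from a factor $f$ of $p_i$, it chases divisibility through \eqref{eq:binary} to produce a chain of factors of $p_i$ and $p_j$ whose shift vectors increase monotonically in the $(\si_i,\si_j)$-directions; since $p_i,p_j$ have only finitely many irreducible factors, the chain must repeat, which forces $\si_i^r\si_j^s f=f$ with $(r,s)\neq(0,0)$ nonnegative, and then an LCM over all factors plus conjugation by $\si_i^{1/2}\si^{a}$ transfers the relation to $q_0$. You instead pass immediately to multiplicity functions: you check (correctly) that $\bar a\mapsto\si_i^{1/2}\si_j^{1/2}\si^a(q_0)$ is injective on $\Z^n/H$, extract the linear identity $m_{i,\bar a}+m_{j,\bar a}=m_{i,\overline{a+\Be_j}}+m_{j,\overline{a+\Be_i}}$, note it decouples over $(\Z\Be_i+\Z\Be_j)$-orbits each isomorphic to $\Z^2/H_{ij}$, encode it as $x(y-1)A+y(x-1)B=0$ in $\Z[\Z^2/H_{ij}]$, and kill the case where $H_{ij}$ has no nonzero nonnegative element by evaluating at a positive character with $\chi(\Be_i),\chi(\Be_j)>1$ (your classification of such subgroups and the character construction are both sound, and finiteness of the supports makes the evaluation legitimate). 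What your approach buys is a sharper structural statement—if the stabilizer of $q_0$ meets $\Z_{\geq 0}\Be_i+\Z_{\geq 0}\Be_j$ only in $0$ then $p_i=p_j=1$—and it is closer in spirit to the lattice/multiplicity-function viewpoint the paper only introduces later via the higher spin $6$-vertex configurations of Theorem \ref{thm:vertex}; what the paper's factor-chasing argument buys is brevity and the avoidance of any group-algebra or character machinery. Both proofs use only $\op{char}\K\neq 2$, so the level of generality is the same.
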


\begin{proof}
Let $f$ be an irreducible factor of $p_i$. Then $\si_j^{1/2}f$ divides the LHS of \eqref{eq:binary}, which means that $\si_ j^{1/2}f$ divides the RHS of the same equation. We have then that $\si_j^{1/2}f| \si_j^{-1/2}p_i$ or $\si_j^{1/2}f|\si_i^{-1/2}p_j$. Say that $\si_j^{1/2}f| \si_j^{-1/2}p_i$, then $\si_jf|p_i$ (in the other case we get that $\si_i^{1/2}\si_j^{1/2}f|p_j)$. Iterating the argument, we have a sequence $\{\si_i^{r_k}\si_j^{s_k}f\}$ of irreducible factors of $p_i$, and a sequence $\{\si_i^{r_\ell+1/2}\si_j^{s_\ell+1/2}f\}$ of irreducible factors of $p_j$, with $r_k$, $s_k$, $r_\ell$, $s_\ell$ nonnegative integers. Since $R$ is a UFD, the sequence $\{\si_i^{r_k}\si_j^{s_k}f~:~k\geq 1\}$ will have repetitions, which implies that there is a pair of nonnegative integers $(r,s)\neq (0,0)$ such that $\si_i^r \si_j^s f=f$. This argument can be repeated for all irreducible factors of $p_i$ and $p_j$ and, by taking the least common multiple of the powers, we have that for each pair $(i,j)$, $i,j\in\{1,\ldots,n\}$, $i\neq j$ there is a pair of nonnegative integers $(r_{ij},s_{ij})\neq (0,0)$ such that $\si_i^{r_{ij}}\si_j^{s_{ij}}$ acts as the identity on all the irreducible factors of $p_i$ and $p_j$. Since $\CO=\Z^nq_0$, with $q_0\in \Irr(R)$, any irreducible factor $q_i$ of $p_i$ is in $\si_i^{1/2}\CO$, so it is of the form $q_i=\si_i^{1/2}\si_1^{a_1}\cdots\si_n^{a_n}q_0$. Then
\begin{align}
\notag \si_i^{r_{ij}}\si_j^{s_{ij}} q_i&= q_i \\
\notag \si_i^{r_{ij}}\si_j^{s_{ij}}\si_i^{1/2}\si_1^{a_1}\cdots\si_n^{a_n}q_0&=\si_i^{1/2}\si_1^{a_1}\cdots\si_n^{a_n}q_0 \\
\notag \si_i^{1/2}\si_1^{a_1}\cdots\si_n^{a_n}\si_i^{r_{ij}}\si_j^{s_{ij}}q_0&=\si_i^{1/2}\si_1^{a_1}\cdots\si_n^{a_n}q_0 \\
\notag \si_i^{r_{ij}}\si_j^{s_{ij}}q_0&=q_0.
\end{align}
\end{proof}

\begin{Proposition}\label{prop:almost-trivial}
Suppose $\op{char}\K=0$.
Let $p$ be a monic $\CO$-orbital solution to \eqref{eq:binary}, with $\CO=\Z^n q_0$.
Let $i\neq j$ be two indices such that $p_i=1$ and $p_j\neq 1$, then $\si_i q_0=q_0$.
Viceversa, let $i\neq j$ be two indices such that $\si_i q_0=q_0$ and $\si_j q_0\neq q_0$, then $p_i=1$.
\end{Proposition}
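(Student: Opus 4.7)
The plan is to extract from the binary consistency relation \eqref{eq:binary} an invariance statement of the form $\si_l(p_k)=p_k$ and then upgrade it to $\si_l(q_0)=q_0$. The upgrade has two parts. First, since $\si_l$ permutes the finite set of monic irreducible factors of $p_k$, some positive power $\si_l^N$ fixes any chosen factor $f=\si_k^{1/2}\si_1^{a_1}\cdots\si_n^{a_n}q_0$; commutativity of the $\si_h$'s then yields $\si_l^N q_0=q_0$. Second, we must descend from $\si_l^N q_0=q_0$ to $\si_l q_0=q_0$. For this we view $g(t):=q_0(u-t\al_l)-q_0(u)\in R[t]$ as a polynomial in the auxiliary variable $t$; iterating $\si_l^N q_0=q_0$ gives $g(Nj)=0$ for every $j\in\Z$, so $g$ vanishes at infinitely many elements of $\K$ and is identically zero, whence $\si_l q_0=q_0$. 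This second descent is precisely where $\op{char}\K=0$ is essential, and I expect it to be the main technical point.

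For the first statement, set $p_i=1$ in \eqref{eq:binary} to obtain $\si_i^{1/2}(p_j)=\si_i^{-1/2}(p_j)$, hence $\si_i(p_j)=p_j$. Since $p_j\neq 1$, it has at least one monic irreducible factor, so the two-step upgrade above applies with $l=i$, $k=j$, giving $\si_iq_0=q_0$.

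For the converse, assume $\si_iq_0=q_0$, $\si_jq_0\neq q_0$, and suppose for contradiction that $p_i\neq 1$. Because every monic irreducible factor of $p_i$ is of the form $\si_i^{1/2}\si_1^{a_1}\cdots\si_n^{a_n}q_0$ and every such factor of $p_j$ is of the form $\si_j^{1/2}\si_1^{b_1}\cdots\si_n^{b_n}q_0$, and the $\si_h$'s commute, the hypothesis $\si_iq_0=q_0$ forces $\si_i(p_i)=p_i$ and $\si_i(p_j)=p_j$. Applying the characteristic-zero descent directly to $p_j(u-t\al_i)-p_j(u)$ (with $N=1$) promotes this to $\si_i^{1/2}(p_j)=p_j$. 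Substituting into \eqref{eq:binary} and cancelling the nonzero polynomial $p_j$ yields $\si_j^{1/2}(p_i)=\si_j^{-1/2}(p_i)$, i.e.\ $\si_j(p_i)=p_i$. Since $p_i\neq 1$, we may pick any irreducible factor of it and run the two-step upgrade with $l=j$, $k=i$, producing $\si_jq_0=q_0$ and contradicting the hypothesis.
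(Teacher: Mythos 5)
Your proof is correct and follows essentially the same route as the paper's: manipulate \eqref{eq:binary} to obtain $\si_l(p_k)=p_k$ for the appropriate indices, observe that $\si_l$ then permutes the finitely many monic irreducible factors of $p_k$ so some positive power $\si_l^N$ fixes one of them, descend to $\si_l^Nq_0=q_0$ by commutativity, and finally use $\op{char}\K=0$ to conclude $\si_lq_0=q_0$; the paper states the last step implicitly (compare Lemma \ref{Lemma:fix-auto}) whereas you spell it out. One small inefficiency: to pass from $\si_i(p_j)=p_j$ to $\si_i^{1/2}(p_j)=\si_i^{-1/2}(p_j)$ you do not need the characteristic-zero polynomial argument at all, since applying the automorphism $\si_i^{-1/2}$ to both sides of $\si_i(p_j)=p_j$ gives this immediately.
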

\begin{proof}
If there are two indices such that $p_i=1$ and $p_j\neq 1$, then by \eqref{eq:binary} we get
\begin{align*}
\si_i^{1/2}p_j\si_j^{1/2}p_i&=\si_i^{-1/2}p_j\si_j^{-1/2}p_i\\
\si_i^{1/2}p_j&=\si_i^{-1/2}p_j \\
\si_i p_j=p_j.
\end{align*}
Since $p_j$ is non trivial, this implies that $\si_i q_0=q_0$.

Viceversa, suppose that $\si_i q_0=q_0$, and that there is an index $j$ such that $\si_j q_0\neq q_0$, then by \eqref{eq:binary} we get
\begin{align*}
\si_i^{1/2}p_j\si_j^{1/2}p_i&=\si_i^{-1/2}p_j\si_j^{-1/2}p_i\\
p_j\si_j^{1/2}p_i&=p_j\si_j^{-1/2}p_i \\
\si_j p_i=p_i.
\end{align*}
Since $p_i$ is a product of shifts of $q_0$, and $\si_j q_0\neq q_0$, then the only possiblity is that $p_i=1$, because $\op{char}\K=0$.
\end{proof}

%\begin{Definition}
%Let $q\in\Irr(R)$, we define $\La(q)=\{k\in \{1,\ldots, m\}~:~\deg_{u_k}(q)\geq 1\}$ to be the set of indices of the variables appearing in $q$.
%\end{Definition}
\begin{Lemma}\label{Lemma:fix-auto}
Suppose $\op{char}\K=0$, and let $\si(u)=u-\beta$, that is $\si(u_1, \ldots, u_m)=(u_1-\beta_1, \ldots, u_m-\beta_m)$. For $q_0\in\Irr(R)$%, and $\La=\La(q_0)$, define $\bar{\beta}=(\beta_k)_{k\in\La}$. Then
%\begin{itemize}
%\item If $\deg q_0\geq 2$, then 
$$\si q_0=q_0\quad\text{ if and only if }\quad \langle \nabla q_0, \beta\rangle\equiv0.$$
%\item If $\deg q_0=1$, $q_0=c_0+\sum_{k\in\La}c_ku_k$, then $\si q_0=q_0$ if and only if $\langle c, \bar{\beta} \rangle=0$.
%\end{itemize}
Here $\nabla q_0=\left( \frac{\partial q_0}{\partial u_1},\ldots,\frac{\partial q_0}{\partial u_m}\right)$ is the gradient and 
$$ \langle~,~\rangle: R^m\times \K^m\to R^m, \qquad \langle (\gamma_k), (\beta_k)\rangle = \sum_{k=1}^m\gamma_k\beta_k.$$
\end{Lemma}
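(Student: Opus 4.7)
The plan is to consider the one-parameter polynomial family
$F(s) := q_0(u - s\beta) \in R[s]$ (with $s$ a new indeterminate),
which interpolates between $F(0) = q_0$ and $F(1) = \si q_0$,
and whose derivative at $s=0$ is exactly the pairing in question.
By the chain rule,
\begin{equation*}
F'(s) \;=\; -\sum_{k=1}^m \beta_k \tfrac{\partial q_0}{\partial u_k}(u - s\beta) \;=\; -\big\langle (\nabla q_0)(u - s\beta),\, \beta\big\rangle,
\end{equation*}
so in particular $F'(0) = -\langle \nabla q_0, \beta\rangle$. Each direction of the equivalence then amounts to transferring information between $F$ and its derivative.

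For $(\Leftarrow)$, assuming $\langle \nabla q_0,\beta\rangle \equiv 0$ in $R$, I would apply the $\K[s]$-algebra endomorphism of $R[s]$ sending $u_j \mapsto u_j - s\beta_j$ to this identity to conclude that $F'(s) = 0$ in $R[s]$. Since $\op{char}\K = 0$, this forces $F$ to be constant in $s$, and then $F(1) = F(0)$ reads $\si q_0 = q_0$. For $(\Rightarrow)$, assuming $\si q_0 = q_0$, iteration of $\si$ yields $\si^t q_0 = q_0$, i.e., $F(t) = F(0)$, for every $t \in \Z$. Then $F(s) - F(0) \in R[s]$ has every integer as a root; reading this coefficient-by-coefficient in the $u$-monomials reduces the question to scalar polynomials in $\K[s]$ having infinitely many roots, so $F(s) = F(0)$ identically in $R[s]$. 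Differentiating at $s = 0$ then produces $\langle\nabla q_0,\beta\rangle = 0$.

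The argument is essentially just a Taylor expansion in the auxiliary variable $s$, so there is no serious obstacle. The subtlety to be careful about is the use of characteristic zero, which enters in two places: to justify ``$F'(s)=0$ implies $F$ is constant in $s$'' in the forward direction, and to guarantee that $\Z \hookrightarrow \K$ has infinite image (so that Vandermonde-type reasoning forces a polynomial in $\K[s]$ with all integers as roots to vanish identically) in the converse direction. Note that the irreducibility of $q_0$ plays no role in the lemma itself.
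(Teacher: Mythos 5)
Your proof is correct and takes essentially the same approach as the paper: the paper also reduces the statement to the observation that $q_0(u-\beta)=q_0(u)$ iff $q_0(u+t\beta)=q_0(u)$ for all integers $t$, iff (using $\operatorname{char}\K=0$) the same holds for a polynomial indeterminate $t$, which is equivalent to vanishing of the directional derivative. You have simply made explicit the auxiliary polynomial $F(s)=q_0(u-s\beta)$ and the coefficient-wise Vandermonde argument that the paper leaves implicit.
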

\begin{proof}
The irreducible polynomial $q_0$ is invariant under the automorphism $\si$ if and only if $q_0(u-\beta)=q_0(u)$, which is equivalent to $q_0(u+r\beta)=q_0(u)$ for all $r\in\Z$. Since $q_0$ is a polynomial and $\op{char}\K=0$, this is the same as $q_0(u+t\beta)=q_0(u)$ for all $t\in\K$, which is indeed equivalent to $\langle \nabla q_0, \beta\rangle\equiv 0$ (as can be easily seen by considering $q_0(u+t\beta)$ as a polynomial function of the variable $t$).
\end{proof}

\begin{Theorem}\label{theorem:less3}
Let $\op{char}\K=0$ and let $p$ be a monic $\CO$-orbital solution to \eqref{eq:binary}, with $\CO=\Z^n q_0$. Then there are at most two distinct indices $1\leq i\leq n$ such that $p_i$ is not constant and $\si_ i q_0\neq q_0$.
\end{Theorem}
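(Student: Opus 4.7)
The plan is to argue by contradiction: suppose there were three distinct indices $i, j, k$ such that $p_i, p_j, p_k$ are all non-constant and $\si_\ell q_0 \neq q_0$ for $\ell \in \{i, j, k\}$. The idea is to convert these data into linear relations among the ``directional derivatives''
\[
\gamma_\ell := \langle \nabla q_0, \al_\ell \rangle \in R,\qquad \ell \in \{i,j,k\},
\]
and then extract a sign contradiction from them.

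First, I would invoke Lemma \ref{Lemma:fix-auto} in two ways. On the one hand, the hypothesis $\si_\ell q_0 \neq q_0$ says precisely that $\gamma_\ell \not\equiv 0$ for $\ell = i, j, k$. On the other hand, for each pair $\{a,b\} \subset \{i,j,k\}$, Proposition \ref{prop:indices-ij} furnishes nonnegative integers $(r_{ab}, s_{ab}) \neq (0,0)$ with $\si_a^{r_{ab}} \si_b^{s_{ab}} q_0 = q_0$. Since this composite is translation by $r_{ab}\al_a + s_{ab}\al_b$, applying the lemma again and using linearity of $\langle \nabla q_0, \cdot \rangle$ in the second slot yields
\[
r_{ab}\gamma_a + s_{ab}\gamma_b = 0 \quad \text{in } R.
\]

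The next step is a positivity observation: if $r_{ab} = 0$, then $s_{ab}\gamma_b = 0$ would force $\gamma_b \equiv 0$, contradicting its nonvanishing, and symmetrically $s_{ab} > 0$. Hence both $r_{ab}$ and $s_{ab}$ are strictly positive integers for each of the three pairs. Solving the relations $r_{ij}\gamma_i + s_{ij}\gamma_j = 0$ and $r_{ik}\gamma_i + s_{ik}\gamma_k = 0$ then gives $\gamma_j = c_j\gamma_i$ and $\gamma_k = c_k\gamma_i$ with $c_j, c_k \in \Q_{<0}$.

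The contradiction now drops out of the third pair relation: $r_{jk}\gamma_j + s_{jk}\gamma_k = (r_{jk}c_j + s_{jk}c_k)\gamma_i = 0$, and since $\gamma_i \neq 0$ we must have $r_{jk}c_j + s_{jk}c_k = 0$; but the left side is a sum of two strictly negative rationals, hence strictly negative --- contradiction. I do not anticipate a serious obstacle; the main (mild) subtlety is the positivity step, where the nonnegativity assertion in Proposition \ref{prop:indices-ij} must be combined with the individual nonvanishing hypotheses to guarantee that all six integer coefficients in the three pair relations have the correct sign for the final comparison.
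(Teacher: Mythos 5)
Your proposal is correct and follows essentially the same route as the paper: both use Proposition \ref{prop:indices-ij} together with Lemma \ref{Lemma:fix-auto} to get relations $r_{ab}\langle\nabla q_0,\al_a\rangle+s_{ab}\langle\nabla q_0,\al_b\rangle=0$ with strictly positive coefficients, then derive a sign contradiction among three indices. The only cosmetic difference is the final step: the paper chains the three negative ratios to get $\langle\nabla q_0,\al_i\rangle=(r_1r_2r_3)\langle\nabla q_0,\al_i\rangle$ with a negative product, whereas you substitute into the third pair relation and contradict a strictly negative sum being zero; your explicit justification of the positivity of all six coefficients is a correct elaboration of what the paper states more tersely.
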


\begin{proof}
By Proposition \ref{prop:indices-ij}, for each pair of indices $i$, $j$ such that $p_i$ and $p_j$ are not constant, we have \eqref{eq:fixq0}. By Lemma \eqref{Lemma:fix-auto} we have that
$$  \si_i^{r_{ij}}\si_j^{s_{ij}}q_0(u)=q_0(u-r_{ij}\al_i-s_{ij}\al_j)=q_0(u_1-r_{ij}\al_{1i}-s_{ij}\al_{1j},\ldots,u_n-r_{ij}\al_{ni}-s_{ij}\al_{nj}) $$
is equal to $q_0(u)$ if and only if
\begin{align}\notag \langle \nabla q_0, r_{ij}\al_{i}+s_{ij}\al_{j}\rangle&=0 \\
\label{eq:linear} r_{ij}\langle \nabla q_0, \al_i\rangle +s_{ij} \langle \nabla q_0, \al_j \rangle &= 0
\end{align}

Now suppose that $i\neq j$ are such that $\si_i q_0\neq q_0\neq \si_j q_0$, i.e., $\langle \nabla q_0, \al_i\rangle\neq0\neq\langle \nabla q_0, \al_j\rangle$. Then we have $r_{ij}, s_{ij}>0$ and \eqref{eq:linear} becomes
\begin{align}\notag r_{ij}\langle \nabla q_0, \al_i\rangle +s_{ij} \langle \nabla q_0, \al_j \rangle &= 0 \\
\notag r_{ij}\langle \nabla q_0, \al_i\rangle &=-s_{ij} \langle \nabla q_0, \al_j \rangle \\
\label{eq:cneg-ratio}\langle \nabla q_0, \al_i\rangle &=-\frac{s_{ij}}{r_{ij}} \langle \nabla q_0, \al_j\rangle.
\end{align}
 
% Since $(r_{ij},s_{ij})\neq (0,0)$, \eqref{eq:lindep} says that the vectors $\bar{\al}_i=(\al_{ki})_{k\in\Lambda}$ and $\bar{\al}_j=(\al_{kj})_{k\in\Lambda}$ are linearly dependent. Moreover, again by Lemma \ref{Lemma:fix-auto}, we have that $\si_iq_0\neq q_0$ and $\si_jq_0\neq q_0$ if and only if $\bar{\al}_i$ and $\bar{\al}_j$ are both nonzero vectors. In this case, we get that $r_{ij}, s_{ij}$ are both positive integers and 
%\begin{equation}\label{eq:neg-ratio}\bar{\al}_i=-\frac{s_{ij}}{r_{ij}}\bar{\al}_j\end{equation}
%which says that the vector $\bar{\al}_i$ is a multiple of $\bar{\al}_j$ by a negative rational number (of course this is a symmetric condition on the two vectors). 

Now suppose that $i,j,k$ are three distinct indices as in the statement of the Theorem, then by \eqref{eq:cneg-ratio}, we have negative rational numbers $r_1$, $r_2$ and $r_3$ such that
\begin{align*}\langle \nabla q_0, \al_i\rangle&=r_1 \langle \nabla q_0, \al_j\rangle \\
&= r_1 (r_2\langle \nabla q_0, \al_k\rangle) \\
&= r_1 r_2 (r_3\langle \nabla q_0, \al_i\rangle) \\
&= (r_1r_2r_3)\langle \nabla q_0, \al_i\rangle
\end{align*}
which is impossible.

%Now suppose that $\deg q_0=1$, hence $q_0=c_0+\sum_{k\in\Lambda}^m c_k u_k$ and let $c=(c_k)_{k\in\Lambda}$ (according to our hypothesis, one of these coefficients should be 1 to have a monic solution, depending on the monomial order that we chose, but this is not important for now).

%In this case, by \eqref{eq:fixq0} and Lemma \ref{Lemma:fix-auto} we get
%\begin{align}\notag \sum_{k\in\Lambda}c_k(r_{ij}\al_{ki}+s_{ij}\al_{kj})&=0 \\
%\notag r_{ij}\sum_{k\in\Lambda}c_k\al_{ki}+s_{ij}\sum_{k\in\Lambda}c_k\al_{kj}&=0 \\
%\label{eq:linear} r_{ij}\langle c, \bar{\al}_i\rangle +s_{ij} \langle c, \bar{\al}_j \rangle &= 0
%\end{align}

%Now suppose that $i\neq j$ are such that $\si_i q_0\neq q_0\neq \si_j q_0$, i.e., $\langle c, \bar{\al}_i\rangle\neq0\neq\langle c, \bar{\al}_j\rangle$. Then we have $r_{ij}, s_{ij}>0$ and \eqref{eq:linear} becomes
%\begin{align}\notag r_{ij}\langle c, \bar{\al}_i\rangle +s_{ij} \langle c, \bar{\al}_j \rangle &= 0 \\
%\notag r_{ij}\langle c, \bar{\al}_i\rangle &=-s_{ij} \langle c, \bar{\al}_j \rangle \\
%\label{eq:cneg-ratio}\langle c, \bar{\al}_i\rangle &=-\frac{s_{ij}}{r_{ij}} \langle c, \bar{\al}_j \rangle.
%\end{align}

%Now we can follow the exact same reasoning of the case $\deg q_0\geq 2$, to conclude that when $\deg q_0=1$ it is impossible to have three distinct indices $i,j,k$ such that $p_i$, $p_j$ and $p_k$ are nonconstant and the numbers $ \langle c, \bar{\al}_i \rangle$, $ \langle c, \bar{\al}_j \rangle$, and $\langle c, \bar{\al}_k \rangle$ are all nonzero. 
\end{proof}

Notice that if there is a single index $i$ such that for all other indices $j\neq i$ we have $p_j=1$ and $\si_jq_0=q_0$, then for any value of $\al_i$ and any choice of $p_i$ we trivially get a solution.

As a consequence of Proposition \ref{prop:almost-trivial} and Theorem \ref{theorem:less3}, we have the following immediate consequence.

\begin{Corollary}\label{cor:nontriv}
Suppose that $\op{char} \K= 0$.
For $\CO=\Z^nq_0$, any monic $\CO$-orbital nontrivial solutions $p$ of \eqref{eq:binary} satisfies the following: there are two indices $i,j\in \{1,\ldots, n\}$, $i\neq j$ such that $p_i\neq 1\neq p_j$, $\si_i q_0\neq q_0\neq \si_j q_0$ and for all $k\in\{1,\ldots, n\}\setminus\{i,j\}$ we have $p_k=1$ and $\si_kq_0=q_0$.
\end{Corollary}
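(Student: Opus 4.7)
The plan is a short set-theoretic argument combining Proposition~\ref{prop:almost-trivial} with Theorem~\ref{theorem:less3}. Introduce the two subsets
\[
S=\{i\in\{1,\ldots,n\}:p_i\neq 1\},\qquad T=\{i\in\{1,\ldots,n\}:\si_iq_0\neq q_0\},
\]
of $\{1,\ldots,n\}$. The corollary amounts to the statement that $S=T$ and $|S|=2$, so I would establish these two facts in sequence.

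For the equality $S=T$: nontriviality of $p$ gives $S\neq\emptyset$. For any $i\in S^c$ choose any witness $j\in S$; the pair $(i,j)$ fits the hypothesis of the first half of Proposition~\ref{prop:almost-trivial}, forcing $\si_iq_0=q_0$, i.e.\ $i\in T^c$. Hence $T\subseteq S$. For the reverse inclusion, assume $T\neq\emptyset$ (the case $T=\emptyset$ is a degenerate orbit in which the binary equation imposes no constraint, and is implicitly excluded by the word ``nontrivial''). For any $i\in T^c$, choose a witness $j\in T$; then $(i,j)$ fits the hypothesis of the converse half of Proposition~\ref{prop:almost-trivial}, yielding $p_i=1$, i.e.\ $i\in S^c$. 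Thus $S=T$.

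Finally I apply Theorem~\ref{theorem:less3} to obtain $|S\cap T|\leq 2$, which with $S=T$ collapses to $|S|=|T|\leq 2$. The case $|S|=0$ corresponds to $p=(1,\ldots,1)$, and $|S|=1$ is exactly the single-index situation highlighted in the remark immediately preceding the corollary; both are ruled out by the nontriviality hypothesis. Therefore $|S|=|T|=2$, and setting $\{i,j\}=S=T$ produces the desired two indices: by definition $p_i,p_j\neq 1$ and $\si_iq_0,\si_jq_0\neq q_0$, while any $k\notin\{i,j\}$ lies in $S^c\cap T^c$, so $p_k=1$ and $\si_kq_0=q_0$.

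The only delicate point, and the place where one has to be careful rather than computationally adept, is fixing the meaning of ``nontrivial'' so that the stated conclusion holds verbatim: it must rule out $p=(1,\ldots,1)$, the one-index remark case, and the fixed-orbit case $T=\emptyset$. With that convention in place, the corollary is pure bookkeeping on top of the two previously established results and presents no genuine obstacle.
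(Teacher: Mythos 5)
Your proof is correct and is essentially the paper's own argument: the paper derives this corollary as an immediate consequence of Proposition~\ref{prop:almost-trivial} and Theorem~\ref{theorem:less3}, which are exactly the two ingredients you combine through the sets $S$ and $T$. Your observation that ``nontrivial'' must also exclude the fully fixed-orbit case $T=\emptyset$ (where every $\si_i q_0=q_0$ and the binary equations impose no constraint) is a point the paper leaves implicit, and your reading is the one needed for the statement to hold verbatim.
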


We can then also conclude as follows.

\begin{Corollary}\label{cor:binary-implies-ternary}
For $R=\K[u_1,\ldots,u_m]$, $\op{char}\K=0$ and $\si_i=u-\al_i$, any monic solution $p$ to \eqref{eq:binary} also satisfies \eqref{eq:ternary}.
\end{Corollary}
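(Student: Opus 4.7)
The plan is to reduce to the orbital case via Theorem \ref{theorem:integral-product} and then apply the rank-two structure from Corollary \ref{cor:nontriv}. First observe that \eqref{eq:ternary} for a fixed index $k$ involves only $p_k$ and is multiplicative: if $p_k$ factors as $fg$ with both $f$ and $g$ satisfying \eqref{eq:ternary}, then so does $p_k$. The orbit-wise factorization $p = \prod_\CO p^\CO$ from the proof of Theorem \ref{theorem:integral-product} uses only that $p$ solves \eqref{eq:binary}, and produces factors $p^\CO$ that are $\CO$-orbital and still solve \eqref{eq:binary}. Hence it suffices to verify \eqref{eq:ternary} for an arbitrary $\CO$-orbital solution of \eqref{eq:binary}.

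Next, I would fix such an $\CO$-orbital solution $p$ with $\CO = \Z^n q_0$, and three distinct indices $i, j, k$. If $p_k = 1$ both sides of \eqref{eq:ternary} are $1$, so assume $p_k \neq 1$. By Corollary \ref{cor:nontriv}, exactly two indices $m$ satisfy $p_m \neq 1$, and these coincide with $\{m : \si_m q_0 \neq q_0\}$. Since $k$ is one of those two indices and $i, j, k$ are distinct, at least one of $i, j$---say $i$---must satisfy $\si_i q_0 = q_0$. Because $\si_i$ commutes with every $\si_m^{a/2}$ and fixes $q_0$, it fixes all of $\CO$ and hence all of $\si_k^{1/2}(\CO)$; by the orbital hypothesis every irreducible factor of $p_k$ lies in $\si_k^{1/2}(\CO)$, so $\si_i(p_k) = p_k$, i.e., $p_k(u) = p_k(u - \al_i)$. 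Substituting $u \mapsto u - \al_i/2 - \al_j/2$ yields $p_k(u - \al_i/2 - \al_j/2) = p_k(u + \al_i/2 - \al_j/2)$, while $u \mapsto u + \al_i/2 + \al_j/2$ yields $p_k(u + \al_i/2 + \al_j/2) = p_k(u - \al_i/2 + \al_j/2)$; multiplying the two identities gives \eqref{eq:ternary}.

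No step here is technically difficult, since the heavy lifting has already been done: the real content is packed into Corollary \ref{cor:nontriv}, which forces any triple of distinct indices $(i, j, k)$ with $p_k \neq 1$ to contain at least one index acting trivially on $\CO$. The only point worth verifying carefully is that the binary half of the proof of Theorem \ref{theorem:integral-product} is genuinely independent of whether \eqref{eq:ternary} holds for $p$, which is clear on inspection of that argument.
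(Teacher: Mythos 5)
Your proposal is correct and follows essentially the paper's own route: factor $p$ into orbital solutions via Theorem \ref{theorem:integral-product}, then use Corollary \ref{cor:nontriv} to see that each orbital factor is nontrivial in at most two components, so that whenever $p_k\neq 1$ at least one of $\si_i,\si_j$ fixes $q_0$ and hence $p_k$, and \eqref{eq:ternary} follows by periodicity --- you are simply spelling out the step the paper labels ``immediate.'' One small caveat: Corollary \ref{cor:nontriv} as stated applies only to \emph{nontrivial} orbital solutions, so in the case where exactly one component is $\neq 1$ (the trivial case noted just before that corollary) your claim ``exactly two indices satisfy $p_m\neq 1$'' does not literally hold; there you should instead invoke Proposition \ref{prop:almost-trivial} to conclude $\si_j q_0=q_0$ for every other index $j$, after which your periodicity argument goes through unchanged.
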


\begin{proof}
If $p$ is a solution to \eqref{eq:binary}, then by Theorem \ref{theorem:integral-product} $p=p^{(1)}\cdots p^{(k)}$ with $p^{(i)}$ being a monic $\CO_i$-orbital solution. By Corollary \ref{cor:nontriv}, each $p^{(i)}$ is either a trivial solution or it is only nontrivial in two components. It follows immediately that for all $1\leq i\leq k$, $p^{(i)}$ trivially satisfies \eqref{eq:ternary}, hence so does $p$.
\end{proof}

\section{Rank two solutions via higher spin $6$-vertex configurations}

In this section we generalize the rank two solutions obtained in \cite{HarRos2016} from univariate complex polynomials to multivariate polynomials. In Theorem \ref{thm:vertex}, $\K$ can be an arbitrary field of characteristic not equal to two. To extend the results to higher rank in Corollary \ref{cor:orbital-to-vertex}, we need to assume $\op{char}\K=0$. Although the explicit solutions are quite different, the combinatorics from \cite{HarRos2016,Har2018} can still be adopted.
Let $R=\K[u_1,u_2,\ldots,u_m]$, and consider two automorphisms $\si_1,\si_2$ of $R$ given by $\si_i(u_j)=u_j-\al_{ji}$ for some $m\times 2$-matrix $\al=(\al_{ij})$.

Let $f$ be a monic irreducible element of $R$, and let $\CO=\{\si_1^k\si_2^l(f)\mid k,l\in\Z\}\in \Irr(R)/\Z^2$ be an orbit of irreducible elements of $R$ with respect to the $\Z^2$-action given by $\si_1,\si_2$.
We think of elements of $\CO$ as the midpoints of the faces of a square lattice, see Figure \ref{fig:lattice}. Applying $\si_i$ corresponds to taking a unit step in the $i$:th direction. Thus taking a half-step in the $i$:th coordinate direction takes us from the midpoint of a face to the (midpoint of) an edge with normal vector in the $i$:th direction. We consider configurations where edges have been assigned a non-negative multiplicity.

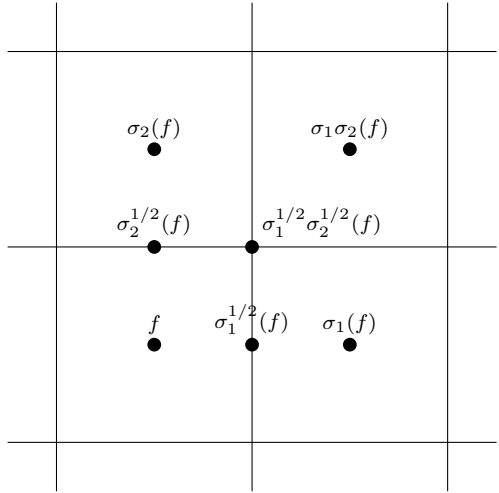
\begin{figure}
\centering
\begin{tikzpicture}[scale=1.3]
\foreach \y in {0,2,4} {
\draw (-.5 cm,\y cm) -- (4.5 cm,\y cm); }
\foreach \x in {0,2,4} {
\draw (\x cm,-.5 cm) -- (\x cm,4.5 cm); }
\draw[thick] (1cm,1cm) node[font=\scriptsize,above] {$f$};
\fill (1,1) circle (2pt);
\fill (2,1) circle (2pt);
\fill (1,2) circle (2pt);
\fill (2,2) circle (2pt);
\fill (1,3) circle (2pt);
\fill (3,3) circle (2pt);
\fill (3,1) circle (2pt);
\draw (2cm,1cm) node[font=\scriptsize,above] {$\si_1^{1/2}(f)$};
\draw (1cm,2cm) node[font=\scriptsize,above] {$\si_2^{1/2}(f)$};
\draw (2cm,2cm) node[font=\scriptsize,above right] {$\si_1^{1/2}\si_2^{1/2}(f)$};
\draw (3cm,1cm) node[font=\scriptsize,above] {$\si_1(f)$};
\draw (1cm,3cm) node[font=\scriptsize,above] {$\si_2(f)$};
\draw (3cm,3cm) node[font=\scriptsize,above] {$\si_1\si_2(f)$};
\end{tikzpicture}
\caption{Part of a square lattice grid with some labels indicated.}
\label{fig:lattice}
\end{figure}

\begin{Definition}
Fix $\CO\in\Irr(R)/\Z^2$. Put $E_i=E_i(\CO)=\si_i^{1/2}(\CO)$ and $V_{12}=V_{12}(\CO)=\si_1^{1/2}\si_2^{1/2}(\CO)$.
Elements of $E_i$ are \emph{edges} of $\CO$ and $V_{12}$ are the \emph{vertices} of $\CO$.

By a \emph{higher spin $6$-vertex configuration} 
$\CL=(\CL_1,\CL_2)$ in $\CO$ we mean a pair of functions $\CL_i:E_i\to \Z_{\ge 0}$ such that for any $v\in V_{12}$:
\begin{align}
\CL_1(\si_2^{1/2}(v))+\CL_2(\si_1^{1/2}(v)) &= \CL_1(\si_2^{-1/2}(v))+\CL_2(\si_1^{-1/2}(v)). \label{eq:6vert}
\end{align}
$\CL$ is \emph{finite} if $\#\{e\in E_i\mid \CL_i(e)\neq 0\}<\infty$ for $i=1,2$.

More generally, we can replace the index set $\{1,2\}$ in the above definition by an arbitrary $2$-subset $\{i,j\}\subseteq\{1,2,\ldots,n\}$, in which case we say that $\CL$ has \emph{index set} $\{i,j\}$.
\end{Definition}

The following theorem shows that there is a bijective correspondence between the set of monic $\CO$-orbital solutions $p=(p_1,p_2)$ to \eqref{eq:binary} and the set of finite higher spin $6$-vertex configurations $\CL$ in $\CO$. 

\begin{Theorem} \label{thm:vertex}
\begin{enumerate}[{\rm (a)}]
\item Given any finite higher spin $6$-vertex configuration $\CL$ in $\CO$, define $p_1,p_2\in R$ by
\begin{equation} \label{eq:sol}
p_i = 
\prod_{e\in E_i} f^{\CL_i(e)}\qquad \forall i\in\{1,2\}
\end{equation}
Then $p=(p_1,p_2)$ is a monic $\CO$-orbital solution to \eqref{eq:binary}.
\item Conversely, given any monic $\CO$-orbital solution $p=(p_1,p_2)$ to 
 \eqref{eq:binary}, then there exists a unique finite cell configuration $\CL$ such that \eqref{eq:sol} holds.
\end{enumerate}
\end{Theorem}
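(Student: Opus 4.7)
The plan is to reduce the binary consistency relation \eqref{eq:binary} to a local balance equation on multiplicities of irreducible factors at each vertex, and to observe that this local balance equation is exactly \eqref{eq:6vert}. Unique factorization in $R=\K[u_1,\ldots,u_m]$ lets us pass back and forth between a monic $\CO$-orbital polynomial and its multiplicity function on $E_i$, so both directions are essentially bookkeeping once the indexing is set up correctly.

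For part (a), I would rewrite \eqref{eq:binary} in the symmetrized operator form
\[
\si_2^{1/2}(p_1)\,\si_1^{1/2}(p_2) = \si_2^{-1/2}(p_1)\,\si_1^{-1/2}(p_2),
\]
substitute the formula \eqref{eq:sol}, and apply the automorphisms factor-by-factor. Each half-shift $\si_j^{\pm 1/2}$ restricts to a bijection $E_i\to V_{12}$: since $E_i=\si_i^{1/2}(\CO)$ and $\CO$ is a $\Z^2$-orbit under $\si_1,\si_2$, one has $\si_j^{\pm 1/2}(E_i) = \si_i^{1/2}\si_j^{\pm 1/2}(\CO) = V_{12}$ for either choice of sign. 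Reindexing both sides as products over $v\in V_{12}$, the exponent of $v$ becomes $\CL_1(\si_2^{-1/2}v)+\CL_2(\si_1^{-1/2}v)$ on the left and $\CL_1(\si_2^{1/2}v)+\CL_2(\si_1^{1/2}v)$ on the right; these coincide at every $v$ by \eqref{eq:6vert}. That $p$ is $\CO$-orbital is automatic from the definition of $p_i$.

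For part (b), starting from a monic $\CO$-orbital solution $p=(p_1,p_2)$ to \eqref{eq:binary}, I would define $\CL_i:E_i\to\Z_{\ge 0}$ by letting $\CL_i(e)$ be the multiplicity of $e$ as an irreducible factor of $p_i$. The orbital hypothesis guarantees that every irreducible factor of $p_i$ lies in $E_i$, so \eqref{eq:sol} holds verbatim; finiteness follows because $p_i$ has only finitely many irreducible factors, and uniqueness of $\CL$ is immediate from unique factorization. To verify \eqref{eq:6vert}, I would run the calculation of part (a) in reverse: expanding both sides of \eqref{eq:binary} as products indexed over $v\in V_{12}$, the distinctness of the elements of $V_{12}$ as members of $\Irr(R)$ combined with unique factorization forces the exponents at each vertex to be equal, which is precisely the balance condition \eqref{eq:6vert}.

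The main technical point, not a serious obstacle, is checking that the four sets $\si_1^{\pm 1/2}\si_2^{\pm 1/2}(\CO)$ all coincide as subsets of $\Irr(R)$ with $V_{12}$, so that the reindexing as products over $V_{12}$ on both sides of \eqref{eq:binary} is legitimate. This follows immediately because $\CO$ is a $\Z^2$-orbit, so shifting by any integer combination of $\si_1$ and $\si_2$ preserves it.
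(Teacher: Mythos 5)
Your proposal is correct and follows essentially the same route as the paper: use unique factorization of the monic $\CO$-orbital $p_i$ into elements of $E_i$ to define (and uniquely recover) $\CL$, then substitute \eqref{eq:sol} into \eqref{eq:binary} and match exponents after reindexing, which turns the binary relation into exactly the local balance condition \eqref{eq:6vert}. The only cosmetic difference is that you reindex the products over the vertex set $V_{12}$ while the paper reindexes over the orbit $\CO$ itself; these differ by the overall shift $\si_1^{1/2}\si_2^{1/2}$ and carry the same content.
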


\begin{proof}
Let $i\in\{1,2\}$. Since $R$ is a UFD we can uniquely factor $p_i$ as follows:
\[p_i=\xi_i \cdot \prod_{g\in \Irr(R)} g^{m_i(g)}\]
for some unit $\xi_i$ and some non-negative integers $m_i(g)$. Since $p$ is $\CO$-orbital, each irreducible factor of $p_i$ belongs to $E_i=\si^{1/2}(\CO)$. In other words $m_i(g)=0$ if $g\notin E_i$ and hence we have
\[p_i=\xi_i \cdot \prod_{e\in E_i} e^{m_i(e)}.\]
Define $\CL_i(e)=m_i(e)$ and let $\CL=(\CL_1,\CL_2)$. It remains to show that $\CL$ is a finite higher spin $6$-vertex configuration. Substituting \eqref{eq:sol} into \eqref{eq:binary} we get
\begin{gather}
\prod_{e\in E_i} \si_j^{1/2}(e)^{\CL_i(e)} \prod_{e\in E_j} \si_i^{1/2}(e)^{\CL_j(e)} = \prod_{e\in E_i} \si_j^{-1/2}(e)^{\CL_i(e)} \prod_{e\in E_j} \si_i^{-1/2}(e)^{\CL_j(e)}\\
\prod_{e\in E_k} \si_i^{1/2}\si_j^{1/2}(e)^{\CL_k(e)} \si_i^{-1/2}\si_j^{-1/2}(e)^{\CL_k(e)} = 
\prod_{e\in E_k} \si_i^{1/2}\si_j^{-1/2}(e)^{\CL_k(e)} \si_i^{-1/2}\si_j^{1/2}(e)^{\CL_k(e)} 
\end{gather}
Making appropriate substitutions to write each side as a product over $f\in \CO$ and identifying powers of $f$ in each side, the claim follows.
\end{proof}

\begin{Corollary}\label{cor:orbital-to-vertex}
Suppose $\op{char}\K=0$.
Let $m$ and $n$ be positive integers.
Let $R=\K[u_1,u_2,\ldots,u_m]$, $(\al_{ij})$ be an $m\times n$-matrix, $\si_i(u_j)=u_j-\al_{ji}$, $p=(p_1,p_2,\ldots,p_n)$ be a monic $\CO$-orbital solution to \eqref{eq:binary}. Then there exists two distinct indices $i,j\in\{1,2,\ldots,n\}$ and a finite higher spin $6$-vertex configuration $\CL$ with index set $\{i,j\}$ such that
\[p_i=\prod_{e\in E_i} e^{\CL_i(e)},
\qquad p_j = \prod_{e\in E_j} f^{\CL_j(e)},
\qquad \text{$p_k=1$ for $k\notin\{i,j\}$.}
\]
\end{Corollary}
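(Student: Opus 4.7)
The plan is to reduce to the rank two case already handled by Theorem \ref{thm:vertex}, using Corollary \ref{cor:nontriv} as the bridge. Given the monic $\CO$-orbital solution $p=(p_1,\ldots,p_n)$, Corollary \ref{cor:nontriv} provides two distinct indices $i,j\in\{1,\ldots,n\}$ such that $p_k=1$ and $\si_k q_0 = q_0$ for every $k\notin\{i,j\}$. In the degenerate cases where at most one component is nonconstant, I would simply choose $i,j$ so as to include the unique nonconstant index (if any) together with any other; Proposition \ref{prop:almost-trivial} (applied to pairs where one component is constant and the other is not) guarantees that the conditions $\si_k q_0=q_0$ for $k\notin\{i,j\}$ still hold, and the orbital hypothesis on the constant component is vacuous.

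Next, I would recognize $(p_i,p_j)$ as a rank two $\CO$-orbital solution. The crucial point is that since $\si_k$ fixes $q_0$ for each $k\notin\{i,j\}$, the $\Z^n$-orbit $\CO=\Z^n q_0$ coincides with the $\Z^2$-orbit $\{\si_i^a\si_j^b q_0 \mid a,b\in\Z\}$ generated by only $\si_i$ and $\si_j$. Consequently the edge sets $E_i=\si_i^{1/2}(\CO)$, $E_j=\si_j^{1/2}(\CO)$ and the vertex set $V_{ij}=\si_i^{1/2}\si_j^{1/2}(\CO)$ appearing in the rank two setup agree with the corresponding sets in the original rank $n$ setup, and the $\CO$-orbital property for $(p_i,p_j)$ is inherited directly from the $\CO$-orbital property of $p$. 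Moreover, the single instance of \eqref{eq:binary} for the pair $(i,j)$, which holds by hypothesis, is exactly the binary consistency relation for the rank two pair $(p_i,p_j)$ with respect to $\si_i,\si_j$.

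Finally, I would apply Theorem \ref{thm:vertex}(b) to $(p_i,p_j)$ (with $\si_1,\si_2$ there replaced by $\si_i,\si_j$) to produce a unique finite higher spin $6$-vertex configuration $\CL=(\CL_i,\CL_j)$ with index set $\{i,j\}$ such that the factorization formulas in the statement hold. Combined with $p_k=1$ for $k\notin\{i,j\}$ from the first step, this yields exactly the data described in the corollary. I do not expect any real obstacle: the substantive content, namely that a nontrivial orbital solution is concentrated in two components, is already contained in Corollary \ref{cor:nontriv}, and the combinatorial encoding in the rank two case is Theorem \ref{thm:vertex}; the present corollary is the assembly of these two ingredients together with the elementary observation that $\CO$ is effectively a $\Z^2$-orbit once all other $\si_k$ fix $q_0$.
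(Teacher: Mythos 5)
Your proof is correct and takes essentially the same approach as the paper, whose proof is the single sentence ``Apply Corollary~\ref{cor:nontriv} and Theorem~\ref{thm:vertex}.'' You supply the routine glue that the paper leaves implicit (handling the degenerate case where fewer than two components are nonconstant, and noting that the $\Z^n$-orbit collapses to a $\Z^2$-orbit because the remaining $\si_k$ fix $q_0$), but the argument is the same.
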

\begin{proof}
Apply Corollary \ref{cor:nontriv} and Theorem \ref{thm:vertex}.
\end{proof}

We summarize the results of the paper in the following statement.

\begin{MainTheorem}\label{thm:summary}
Let $m$ and $n$ be positive integers, $n\ge 2$. Let $\K$ be a field of characteristic zero.
Let $R=\K[u_1,u_2,\ldots,u_m]$ and let $(\al_{ij})\in M_{m\times n}(\K)$.

Let $(p_1,p_2,\ldots,p_n)\in R^n$ be an $n$-tuple of monic polynomials satisfying the following system of equations:
\begin{equation}
\label{eq:binary2}
p_i(u-\tfrac{\al_j}{2})\cdot p_j(u-\tfrac{\al_i}{2}) 
=p_i(u+\tfrac{\al_j}{2})\cdot p_j(u+\tfrac{\al_i}{2}) \quad \forall i\neq j,
\end{equation}
where $u=(u_1,u_2,\ldots,u_m)$ and $\al_i=(\al_{1i},\al_{2i},\ldots,\al_{mi})$.
Then there exist
\begin{enumerate}[{\rm (i)}]
\item 
 a finite set of (distinct) orbits $\CO^{(1)}, \CO^{(2)}, \ldots, \CO^{(d)} \in \Irr(R)/\Z^n$ in the set of monic irreducible elements $\Irr(R)$ with respect to the action of $\Z^n$ by $\K$-algebra automorphisms on $R$ determined by
$\boldsymbol{x}.u_j = u_j - \sum_{i=1}^n x_i\al_{ji}$ for all $\boldsymbol{x}=\sum_{i=1}^n x_k\boldsymbol{e}_k\in\Z^n$;
\item
 for each orbit $\CO^{(a)}$, a $2$-subset $\{i_a,j_a\}\subseteq\{1,2,\ldots,n\}$ such that for all $k\in\{1,2,\ldots,n\}\setminus\{i_a,j_a\}$ we have $\boldsymbol{e}_k.q=q$ for all $q\in\CO^{(a)}$;
\item 
 for each orbit $\CO^{(a)}$, a finite higher spin $6$-vertex configuration $\CL^{(a)}$, with index set $\{i_a,j_a\}$, in $\CO^{(a)}$ such that
\begin{subequations}\label{eq:p-definitions}
\begin{equation}
p_k = p_k^{(1)} p_k^{(2)}\cdots p_k^{(d)} \qquad \forall k\in\{1,2,\ldots,n\},
\end{equation}
\begin{equation}
p_k^{(a)} = \begin{cases}
 \displaystyle \prod_{f\in E_k} f^{\CL^{(a)}_k(f)}, & \text{if $k\in\{i_a,j_a\}$,} \\
1, & \text{otherwise.} 
\end{cases}
\end{equation}
\end{subequations}
\end{enumerate}
Conversely, given orbits $\CO^{(a)}$, $2$-subsets $\{i_a,j_a\}$ and vertex configurations $\CL^{(a)}$ as in {\rm (i)--(iii)}, then $(p_1,p_2,\ldots,p_n)$ defined by \eqref{eq:p-definitions}, satisfies equations \eqref{eq:binary2}.

Moreover, the following relations hold (non-vacuously when $n\ge 3$):
\begin{equation}
\label{eq:thm-ternary}
p_k(u-\tfrac{\al_i}{2}-\tfrac{\al_j}{2})p_k(u+\tfrac{\al_i}{2}+\tfrac{\al_j}{2})=p_k(u-\tfrac{\al_i}{2}+\tfrac{\al_j}{2})p_k(u+\tfrac{\al_i}{2}-\tfrac{\al_j}{2})\quad \forall i\neq j\neq k\neq i.
\end{equation}
where $u=(u_1,u_2,\ldots,u_m)$ and $\al_i=(\al_{1i},\al_{2i},\ldots,\al_{mi})$.
\end{MainTheorem}

\begin{proof}
By combining Theorem \ref{theorem:integral-product} and Corollary \ref{cor:orbital-to-vertex}. The last claim then follows from Corollary \ref{cor:binary-implies-ternary}.
\end{proof}

\subsection{Examples}

\begin{Example} \label{ex:example1}
Let $(m,n)=(2,3)$, $\al=\left[\begin{smallmatrix}-1&1&0\\0&-1&1\end{smallmatrix}\right]$, $R=\C[u_1,u_2]$ and $\si_i(u_j)=u_j-\al_{ji}$.
Then 
\[p=(p_1,p_2,p_3)=\big(u_1-\tfrac{1}{2},\,(u_1+\tfrac{1}{2})(u_2-\tfrac{1}{2}),\,u_2+\tfrac{1}{2}\big)\]
is a solution to the consistency equations \eqref{eq:consistency-eqs} related to the Lie algebra $\mathfrak{gl}_3$, see \cite{Ser2001}. It is a monic solution and its factorization into orbital solutions is  $p=p^{(1)}p^{(2)}$ where $p^{(i)}$ is $(\Z^3\cdot u_i)$-orbital and given by
\begin{equation}
p^{(1)}=(u_1-\tfrac{1}{2},\,u_1+\tfrac{1}{2},\,1),\qquad
p^{(2)}= (1,\,u_2-\tfrac{1}{2},\,u_2+\tfrac{1}{2}).
\end{equation}
Also note that $\{i_1,j_1\}=\{1,2\}$ and $\{i_2,j_2\}=\{2,3\}$.
\end{Example}

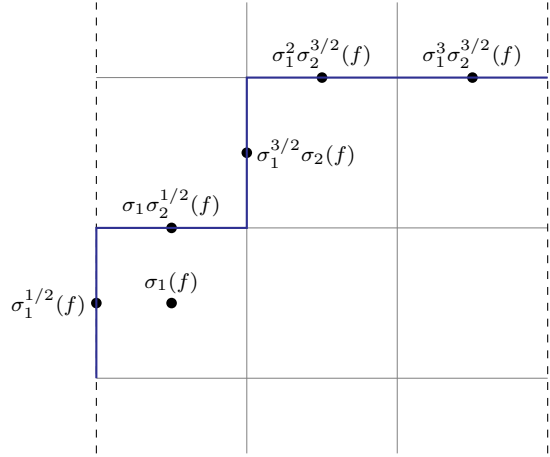
\begin{figure}[h!]
\centering
\begin{tikzpicture}[scale=2]
\foreach \y in {-1,0,...,1} {
\draw[help lines] (0 cm,\y cm) -- (3 cm,\y cm); }
\foreach \x in {1,...,2} {
\draw[help lines] (\x cm,-1.5 cm) -- (\x cm,1.5 cm); }
\draw[dashed] (0,-1.5 cm) -- (0,1.5cm);
\draw[dashed] (3,-1.5 cm) -- (3,1.5cm);
%\fill (0,-1) circle (2pt);
%\fill (3,1) circle (2pt);
\fill (.5,-.5) circle (1pt);
\draw (.5cm,-.5cm) node[font=\scriptsize,above] {$\si_1(f)$};

\fill (0,-.5) circle (1pt) node[font=\scriptsize,left] {$\si_1^{1/2}(f)$};

\fill (.5,0) circle (1pt) node[font=\scriptsize,above] {$\si_1\si_2^{1/2}(f)$};

\fill (1,.5) circle (1pt) node[font=\scriptsize,right] {$\si_1^{3/2}\si_2(f)$};

\fill (1.5,1) circle (1pt) node[font=\scriptsize,above] {$\si_1^2\si_2^{3/2}(f)$};

\fill (2.5,1) circle (1pt) node[font=\scriptsize,above] {$\si_1^3\si_2^{3/2}(f)$};

\draw[thick,Blue] (0,-1) -- (0,0) -- (1,0) -- (1,1) -- (2,1)--(3,1);
\end{tikzpicture}
\caption{Diagram depicting a lattice configuration $\CL$ corresponding to the solution from Example \ref{ex:example2}. Edges $e$ for which $\CL_i(e)=1$ are blue (solid).}
\label{fig:example}
\end{figure}

\begin{Example} \label{ex:example2}

Let $(m,n)=(3,4)$, $R=\Q[u_1,u_2,u_3]$, 

$f(u_1,u_2,u_3)=(u_2+u_3)^2-(u_1^3-u_1+1)$, \quad $\al=\left[\begin{matrix} 2& -3 & 0 & 0 \\ 4 & -5 & 1 & -3\\ -2 & 2 & -1 & 3 \end{matrix}\right]$.

Notice that we have $\si_3 f=\si_4 f=f=\si_1^3\si_2^2 f$. We can then define a higher spin $6$-vertex configuration, with index set $\{1,2\}$, given by

\begin{align*}
\CL_1(e) &=
\begin{cases}
1, & 
\text{if $e\in\big\{\si_1^{1/2}(f),\,\si_1^{3/2}\si_2(f)\big\}$}\\
0, & \text{otherwise}
\end{cases} \\
\CL_2(e) &=
\begin{cases}
1, & 
\text{if $e\in\big\{\si_1\si_2^{1/2}(f),\,\si_1^2\si_2^{3/2}(f),\, \si_1^3\si_2^{3/2}(f)\big\}$}\\
0, & \text{otherwise}
\end{cases}
\end{align*}

From Figure \ref{fig:example}, it is clear that this configuration satisfies \eqref{eq:6vert} and hence corresponds to an orbital solution, which can be written as $p=(p_1,p_2,p_3,p_4)$, where
\begin{align*}
 p_1&=\si_1^{1/2}(f)\si_1^{3/2}\si_2(f)\\
&=f\big(u_1-\tfrac{1}{2}\al_{11},\, u_2-\tfrac{1}{2}\al_{21},\, u_3-\tfrac{1}{2}\al_{31}\big)\cdot f\big(u_1-\tfrac{3}{2}\al_{11}-\al_{12},\, u_2-\tfrac{3}{2}\al_{21}-\al_{22},\, u_3-\tfrac{3}{2}\al_{31}-\al_{32}\big)\\
&=\left((u_2+u_3-1)^2-((u_1-1)^3-u_1+2)\right)\cdot \left((u_2+u_3-1)^2-((u_1+1)^3-u_1)\right)
\end{align*}
\begin{align*}
 p_2 &= \si_1\si_2^{1/2}(f)\cdot \si_1^2\si_2^{3/2}(f)\cdot \si_1^3\si_2^{3/2}(f)\\
&=
\left((u_2+u_3+\tfrac{3}{2})^2-((u_1+\tfrac{3}{2})^3-u_1-\tfrac{1}{2})\right)\cdot \left((u_2+u_3+\tfrac{1}{2})^2-((u_1+\tfrac{1}{2})^3-u_1-\tfrac{3}{2})\right)\\
&\phantom{=}\cdot \left((u_2+u_3-\tfrac{5}{2})^2-((u_1-\tfrac{5}{2})^3-u_1+\tfrac{5}{2})\right)
\end{align*}
$$p_3=1,\qquad p_4=1.$$
\end{Example}

\section{Irreducible factorization of multiquiver solutions}

In this section we show how the solutions to the consistency equations obtained in \cite{HarSer2016} fit into our classification scheme.

\subsection{Input matrix}
Let $\beta=(\beta_{ij})$ be an $m\times n$-matrix satisfying the following conditions:
\begin{align}
& \text{$\beta_{ij}\in\Z$ for all $i,j$,} \label{eq:beta-1} \\
& \text{$\beta_{ij}\beta_{ik}\le 0$ for all $i,j,k$ with $j\neq k$.} \label{eq:beta-2}
\end{align}
Condition \eqref{eq:beta-2} is equivalent to
\begin{equation} \label{eq:beta-3}
\text{Every row of $\beta$ contains at most one positive and at most one negative integer.}
\end{equation}
Moreover $\beta$ can be interpreted as the incidence graph of certain multiquivers, see \cite{HarSer2016} for details.

\subsection{Solution attached to $\beta$}
Let $R=\K[u_1,u_2,\ldots,u_m]$ and $\sigma=(\sigma_1,\sigma_2,\ldots,\sigma_n)$ be given by $\sigma_i(u_j)=u_j-\beta_{ji}$ and define $p=(p_1,p_2,\ldots,p_n)\in R^n$ by

\begin{equation}
p_i(u_1,u_2,\ldots,u_m)=\prod_{j=1}^m p_{ji}(u_j),
\end{equation}
where
\begin{equation}
p_{ji}(u_j)=\begin{cases}
u_j(u_j+1)\cdots (u_j+\beta_{ji}-1) & \text{if $\beta_{ji}>0$},\\
1 &\text{if $\beta_{ji}=0$},\\
(u_j-1)(u_j-2)\cdots (u_j-|\beta_{ji}|) & \text{if $\beta_{ji}<0$}.
\end{cases}
\end{equation}

\begin{Theorem}[{\cite{HarSer2016}}]
$\sigma$ and $p$ satisfy the TGWA consistency relations from \cite{FutHar2012a}. That is,
\begin{subequations}\label{eq:TGWA-consistency-nonsymmetric}
\begin{align}
\si_i\si_j(p_ip_j)&=\si_i(p_i)\si_j(p_j) \quad \forall i\neq j,
\label{eq:non-symmetric-1} \\
\si_i\si_k(p_j)p_j &= \si_i(p_j)\si_k(p_j) \quad \forall i\neq j\neq k\neq i.
\label{eq:non-symmetric-2}
\end{align}
\end{subequations}
\end{Theorem}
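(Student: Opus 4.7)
The key structural observation is that each $p_i$ factors as a product $\prod_{j=1}^m p_{ji}(u_j)$ of single-variable polynomials living in \emph{disjoint} variables, and each automorphism $\sigma_i$ acts diagonally as $u_j\mapsto u_j-\beta_{ji}$. Consequently, in both \eqref{eq:non-symmetric-1} and \eqref{eq:non-symmetric-2}, each side factors as a product over the row index $j$ (respectively $\ell$), and the two sides agree if and only if they agree in each row separately. The plan is therefore to fix an arbitrary row and verify the corresponding single-variable identities, using the sign condition \eqref{eq:beta-2}.

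For the binary relation \eqref{eq:non-symmetric-1}, after reduction to row $k$, the identity to check is
\[
p_{ki}(u_k-\beta_{ki}-\beta_{kj})\,p_{kj}(u_k-\beta_{ki}-\beta_{kj}) \;=\; p_{ki}(u_k-\beta_{ki})\,p_{kj}(u_k-\beta_{kj}).
\]
If either $\beta_{ki}$ or $\beta_{kj}$ vanishes, the corresponding factor is $1$ and the identity reduces to a tautology. Otherwise, by \eqref{eq:beta-2}, $\beta_{ki}$ and $\beta_{kj}$ have strictly opposite signs; writing, say, $a=\beta_{ki}>0$ and $b=-\beta_{kj}>0$, both sides become products of $a+b$ linear factors of the form $u_k+r$ for $r$ ranging over the set of $a+b$ consecutive integers $\{-a,-a+1,\ldots,b-1\}$. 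This is the main (but elementary) computational content of the proof.

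For the ternary relation \eqref{eq:non-symmetric-2}, after reduction to row $\ell$, the identity to check is
\[
p_{\ell j}(u_\ell-\beta_{\ell i}-\beta_{\ell k})\,p_{\ell j}(u_\ell) \;=\; p_{\ell j}(u_\ell-\beta_{\ell i})\,p_{\ell j}(u_\ell-\beta_{\ell k}).
\]
If $\beta_{\ell j}=0$ then $p_{\ell j}=1$ and both sides equal $1$. Otherwise $\beta_{\ell j}\neq 0$, and this is where \eqref{eq:beta-2}, or equivalently \eqref{eq:beta-3}, does the work: since row $\ell$ already contains the nonzero entry $\beta_{\ell j}$ (of some sign), the other two entries $\beta_{\ell i},\beta_{\ell k}$ must each have the opposite sign or be zero, but they also cannot both be nonzero of the same sign without violating \eqref{eq:beta-2}. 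Hence at least one of $\beta_{\ell i},\beta_{\ell k}$ is zero, and the identity again collapses to a tautology.

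I do not anticipate a significant obstacle: once the per-row reduction is carried out, the binary case is a telescoping of consecutive-integer ranges, and the ternary case is handled combinatorially by the sign constraint \eqref{eq:beta-2} forcing at least one of the shift parameters to vanish whenever $p_{\ell j}\neq 1$.
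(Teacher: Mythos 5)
Your verification is correct. The row-by-row reduction is legitimate: each $p_i=\prod_{\ell}p_{\ell i}(u_\ell)$ is a product of univariate polynomials in pairwise distinct variables and each $\sigma_i$ translates each variable separately, so it suffices (you only need the ``if'' direction of your ``if and only if'', which is all you use) to check the identities row by row; your binary computation is right --- with $a=\beta_{ki}>0$, $b=-\beta_{kj}>0$ both sides equal $\prod_{r=-a}^{b-1}(u_k+r)$, and the cases with a zero entry are tautologies --- and your ternary argument correctly uses \eqref{eq:beta-3}: if $\beta_{\ell j}\neq 0$ then the pairwise sign condition forces at least one of $\beta_{\ell i},\beta_{\ell k}$ to vanish, collapsing the identity. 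Note that the paper itself gives no proof of this statement: it is quoted from \cite{HarSer2016}, so there is no in-paper argument to compare against. However, the paper does offer an alternative route to the ternary relation: after symmetrization (Section \ref{sec:symmetrized}) it points out that, by Corollary \ref{cor:binary-implies-ternary}, in characteristic zero the relation \eqref{eq:non-symmetric-2} is automatically a consequence of \eqref{eq:non-symmetric-1}, whereas in \cite{HarSer2016} (and in your proposal) the ternary relation is verified directly. Your direct check is elementary, characteristic-free (the identities hold over $\Z[u]$), and makes transparent exactly where condition \eqref{eq:beta-2} is used, while the paper's deduction buys generality (no case analysis, applies to every monic solution of the binary equation) at the cost of the orbital-decomposition machinery. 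One cosmetic point: the ``if and only if'' in your reduction is stronger than needed, though it is in fact true here by unique factorization since the row factors are monic univariate polynomials in disjoint variables.
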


\subsection{Symmetrized version}\label{sec:symmetrized}
We write \eqref {eq:TGWA-consistency-nonsymmetric}  in the symmetric form. Applying $\si_i^{-1/2}\si_j^{-1/2}$ to equation \eqref{eq:non-symmetric-1} and putting $\tilde p_i=\si_i^{1/2}(p_i)$ gives
\begin{equation} \label{eq:symm-binary}
\si_j^{1/2}(\tilde p_i)\si_i^{1/2}(\tilde p_j)=\si_j^{-1/2}(\tilde p_i)\si_i^{-1/2}(\tilde p_j)
\end{equation}
That is, $\tilde p:=(\tilde p_1,\tilde p_2,\ldots,\tilde p_n)$ satisfies the symmetrized binary TGWA consistency equation.
Similarly applying $\si_i^{-1/2}\si_k^{-1/2}\si_j^{1/2}$ to equation \eqref{eq:non-symmetric-2} gives the equivalent relation
\begin{equation}\label{eq:symm-ternary}
\si_i^{1/2}\si_k^{1/2}(\tilde p_j)\si_i^{-1/2}\si_k^{-1/2}(\tilde p_j) = \si_i^{1/2}\si_k^{-1/2}(\tilde p_j)\si_i^{-1/2}\si_k^{1/2}(\tilde p_j)
\end{equation}
which shows that $\tilde p$ also satisfies the symmetrized ternary TGWA consistency equation.
In \cite{HarSer2016}, relation \eqref{eq:non-symmetric-2} was proved separately.
But in fact, as we showed in Corollary \ref{cor:binary-implies-ternary}, equation \eqref{eq:symm-ternary} actually follows from \eqref{eq:symm-binary}, hence \eqref{eq:non-symmetric-2} follows from \eqref{eq:non-symmetric-1}.

Explicitly, we have
\begin{align*}
\tilde p_i &= \sigma_i^{1/2}(p_i) = p_i\big(u_1-\frac{\beta_{1i}}{2},u_2-\frac{\beta_{2i}}{2},\ldots,u_m-\frac{\beta_{mi}}{2}\big)\\
&= \prod_{j=1}^m p_{ji}(u_j-\frac{1}{2}\beta_{ji})
\end{align*}
To write a closed formula we also make an overall shift by $1/2$.
Put $q_{ji}(u_j)=p_{ji}(u_j-\frac{1}{2}\beta_{ji}+\frac{1}{2})$. Then 
\begin{equation}
 q_{ji}(u_j)=
\begin{cases}
(u_j-\frac{1}{2}\beta_{ji}+\frac{1}{2})(u_j-\frac{1}{2}\beta_{ji}+\frac{3}{2})\cdots (u_j+\frac{1}{2}\beta_{ji}-\frac{1}{2}),&\text{if $\beta_{ji}>0$}\\
1,&\text{if $\beta_{ji}=0$}\\
(u_j-\frac{1}{2}|\beta_{ji}|+\frac{1}{2})(u_j-\frac{1}{2}|\beta_{ji}|+\frac{3}{2})\cdots (u_j+\frac{1}{2}|\beta_{ji}|-\frac{1}{2}),&\text{if $\beta_{ji}<0$}
\end{cases}
\end{equation}
We can summarize these observations as follows.

\begin{Proposition}
Let $\beta$ be an $m\times n$ matrix satisfying conditions \eqref{eq:beta-1} and \eqref{eq:beta-2}. Put 
\begin{equation}
\tilde \beta_{ji}=\frac{1}{2}(|\beta_{ji}|-1)
\end{equation}
Define polynomials for $(i,j)\in\iv{1}{n}\times\iv{1}{m}$:
\begin{equation}
q_{ji}(u_j)= (u_j-\tilde \beta_{ji})(u_j-\tilde\beta_{ji}+1)\cdots (u_j+\tilde\beta_{ji})
\end{equation}
then for all $j\in\iv{1}{m}$ the $n$-tuple
\[q_j=(q_{j1}(u_j),q_{j2}(u_j),\ldots,q_{jn}(u_j))\]
is a solution to the consistency equations \eqref{eq:consistency-eqs}.
\end{Proposition}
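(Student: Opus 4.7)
The plan is to reduce the claim to the single-variable case $m=1$ and then verify the resulting consistency equations by a short direct computation.

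First, I would observe that for fixed $j$ each entry $q_{ji}(u_j)$ depends only on $u_j$, while each automorphism $\sigma_i$ acts on $u_j$ by the shift $u_j \mapsto u_j - \beta_{ji}$. Consequently the consistency equations \eqref{eq:consistency-eqs} for $q_j$ are equivalent to those of the univariate tuple $(q_{j1}, \ldots, q_{jn})$ with shift row $\alpha = (\beta_{j1}, \ldots, \beta_{jn})$. Under condition \eqref{eq:beta-3}, this row has at most one positive and at most one negative entry, so at most two of the $q_{ji}$ are non-constant.

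I would then dispatch the easy cases by a tiny case analysis. If fewer than two of the $q_{ji}$ are non-constant, then every binary equation is either $1 = 1$ or involves a zero shift applied to the single non-trivial factor, and the ternary equation reduces to an equality of identical polynomials. In the remaining case exactly two entries, say $\beta_{j i_1} = a > 0$ and $\beta_{j i_2} = -b < 0$, are nonzero; the ternary equation \eqref{eq:ternary} is still trivial here, since it requires three distinct indices but at most two can lie in $\{i_1, i_2\}$, forcing at least one shift to vanish and both sides to coincide term-wise. The one substantive step is the binary equation between $i_1$ and $i_2$: after the shifts both sides become products of $a+b$ linear factors in $u_j$ with consecutive integer-spaced roots centered at $u_j$, and a direct comparison shows that both equal $\prod_{k=0}^{a+b-1}(u_j - (a+b-1)/2 + k)$. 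This Pochhammer-style telescoping identity is the main (though modest) calculation and the only real obstacle.

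As a cleaner alternative, one could instead derive the proposition from the symmetrization discussion in Section~\ref{sec:symmetrized}: the Hartwig--Serganova theorem implies that $\tilde p$ satisfies the symmetrized binary consistency equation, and since each $\tilde p_i$ factors as $\prod_j \tilde p_{ji}(u_j)$ into monic univariate polynomials in disjoint variables, a standard unique-factorization argument (comparing degrees in $u_j$ with the other variables specialized) shows that the equation separates over $j$. The final overall shift by $1/2$ on each $u_j$ preserves the equations, and Corollary~\ref{cor:binary-implies-ternary} then delivers the ternary relation automatically.
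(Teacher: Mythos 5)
Your proposal is correct, but your primary argument takes a genuinely different route from the paper's. The paper does not verify the equations directly: it invokes the Hartwig--Serganova theorem (that the multivariate tuple $p$ attached to $\beta$ satisfies the non-symmetric consistency relations), passes to the symmetrized and shifted tuple as in Section \ref{sec:symmetrized}, and then separates the resulting solution over the variables by observing that for $j\neq j'$ the irreducible factors of $q_{ji}$ and $q_{j'i'}$ lie in different $\Z^n$-orbits (being univariate in distinct variables), so each per-variable factor $q_j$ is itself a solution by the orbital factorization mechanism of Theorem \ref{theorem:integral-product}. Your second, ``cleaner alternative'' is essentially this route. Your main route is instead a self-contained elementary verification: restrict to the $j$-th row, note via \eqref{eq:beta-3} that at most two of the $q_{ji}$ are non-constant and that the corresponding entries $a>0$, $-b<0$ have opposite signs by \eqref{eq:beta-2}, dispose of the degenerate binary and ternary cases, and check the single nontrivial instance of \eqref{eq:binary} by the telescoping computation showing both sides equal $\prod_{k=0}^{a+b-1}\bigl(u_j-\tfrac{a+b-1}{2}+k\bigr)$; I checked this identity and it is correct, and it genuinely needs the sign condition. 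What each approach buys: the paper's proof is a one-line reduction given the machinery and the cited external theorem, whereas yours is independent of that theorem (indeed, multiplying the row solutions over $j$ re-proves its symmetrized form) and establishes \eqref{eq:ternary} without appealing to Corollary \ref{cor:binary-implies-ternary} or to $\op{char}\K=0$, at the cost of an explicit (but short) Pochhammer-style calculation. One small imprecision to fix in your ternary case analysis: when the exceptional index outside $\{i_1,i_2\}$ is $k$ itself while the two shift indices are exactly $\{i_1,i_2\}$, neither shift vanishes; the equation is trivial there because $p_k=1$, so you should state that subcase separately rather than claiming a shift must vanish.
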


\begin{proof}
It suffices to observe that if $j\neq j'$, then 
\[(q_{j1}(u_j),q_{j2}(u_j),\ldots,q_{jn}(u_j))\]
and
\[(q_{j'1}(u_{j'}),q_{j'2}(u_{j'}),\ldots,q_{j'n}(u_{j'}))\]
belong to different orbits with respect to the action of $\Z^n$ via $\si$. More precisely, given $j\neq j'$ in $\iv{1}{m}$ and $i,i'\in\iv{1}{n}$ then there are no integers $k_s$ such that some irreducible factor of $\si_1^{k_1}\si_2^{k_2}\cdots\si_n^{k_n}(q_{ji})$ divides $q_{j'i'}$. This is obvious since the former is a univariate polynomial in $u_j$ and the latter a univariate polynomial in $u_{j'}$.
\end{proof}

\subsection{Factorization into irreducibles}
By \cite[Prop.~6.2]{HarRos2016}, for each $j\in\iv{1}{m}$ the solution $q_j$ factors into a product of  $\gcd(\beta_{ji_1},\beta_{ji_2})$ solutions, where $\{i_1,i_2\}=\{i\in\iv{1}{n}\mid \beta_{ji}\neq 0\}$ (assume without loss of generality that all rows in $\beta$ are nonzero).
So in total the solution from \cite{HarSer2016}, corresponding to a matrix $\beta$, factors into
\[
\prod_{j=1}^m \gcd(\beta_{j1}, \beta_{j2},\ldots,\beta_{jn})
\]
orbits. Each orbit factor is actually irreducible and corresponds to a (unique) zero area generalized Dyck path. Explicitly, let $\ga_j = \gcd(\beta_{j1},\beta_{j2},\ldots,\beta_{jn})$. We have
\[
(q_{j1}(u_j),q_{j2}(u_j),\ldots, q_{jn}(u_j))=
(1,\ldots,q_{ji_1}(u_j),\ldots,q_{ji_2}(u_j),\ldots,1)
\]
i.e. all entries are $1$ except possibly $2$ places. Then, following \cite{HarRos2016}, we can further factor
\[
(q_{ji_1}(u_j),q_{ji_2}(u_j))=
\prod_{k=0}^{\ga_j-1} (q_{ji_1}^{(k)}(u_j),q_{ji_2}^{(k)}(u_j))
\]
where $q_{ji}^{(k)}(u_j)$ is the product of all factors of the form $(u_j-l)$ where $l\equiv k\mod {\ga_j}$. The factors $q_{ij}^{(k)}(u_j)$ were expressed using zero area generalized Dyck paths in \cite{HarRos2016}.

\bibliographystyle{siam} %{abbrv}

\end{document}